\newcommand{\C}{\mathbb{C} }
\newcommand{\D}{\mathbb{D} }
\newcommand{\K}{\mathbb{K} }
\newcommand{\Z}{\mathbb{Z} }
\newcommand{\TT}{\mathbb{T} }
\newcommand{\Tt}{\mathbf{T} }
\newcommand{\CQ}{\mathcal{Q} }
\newcommand{\Y}{\mathbf{Y}}
\newcommand{\T}{\bf{T}}
\newcommand{\Mod}{{\rm{Mod\mbox{-}}}}
\newcommand{\mmod}{{\rm{{mod\mbox{-}}}}}
\newcommand{\Inj}{{\rm{Inj}\mbox{-}}}
\newcommand{\Injj}{{\rm{inj}\mbox{-}}}
\newcommand{\Proj}{{\rm{Prj}\mbox{-}}}
\newcommand{\Projj}{{\rm{prj}\mbox{-}}}
\newcommand{\QR}{{{\rm Rep}{(\CQ, R)}}}
\newcommand{\bb}{{\rm{b}}}
\newcommand{\im}{{\rm{Im}}}
\newcommand{\sg}{{\rm{sg}}}
\newcommand{\ac}{{\rm{ac}}}
\newcommand{\tac}{{\rm{tac}}}
\newcommand{\HE}{{\rm{H}}}
\newcommand{\ZE}{{\rm{Z}}}
\newcommand{\BE}{{\rm{B}}}
\newcommand{\CK}{{\rm{C}}}
\newcommand{\Coker}{{\rm{Coker}}}
\newcommand{\Ker}{{\rm{Ker}}}
\newcommand{\HH}{{\rm{H}}}
\newcommand{\Hom}{{\rm{Hom}}}
\newcommand{\Thick}{{\rm{Thick}}}
\newtheorem{theorem}{Theorem}[section]
\newtheorem{corollary}[theorem]{Corollary}
\newtheorem{lemma}[theorem]{Lemma}
\newtheorem{proposition}[theorem]{Proposition}
\theoremstyle{definition}
\newtheorem{definition}[theorem]{Definition}
\newtheorem{example}[theorem]{Example}
\newtheorem{construction}[theorem]{Construction}
\newtheorem{remark}[theorem]{Remark}
\theoremstyle{plain}
\theoremstyle{definition}
\numberwithin{equation}{section}
\begin{document}

\title[Homotopy category of N-complexes of projective modules]{Homotopy category of N-complexes of projective modules}

\author[Bahiraei, Hafezi, Nematbakhsh]{P.Bahiraei, R.Hafezi, A.Nematbakhsh}

\address{Department of Mathematics, University of Isfahan, P.O.Box: 81746-73441, Isfahan, Iran and School of Mathematics, Institute for Research in Fundamental Science (IPM), P.O.Box: 19395-5746, Tehran, Iran }  \email{pbahiraei.math@sci.ui.ac.ir}
\email{hafezi@ipm.ir} \email{nematbakhsh@ipm.ir}

\subjclass[2010]{18E30, 16G99, 18G05, 18G35.}

\keywords{Homotopy category, Derived category, $N$-complexes}

\begin{abstract}
In this paper, we show that the homotopy category of $N$-complexes of projective $R$-modules is triangle equivalent to the homotopy category of projective $\TT_{N-1}(R)$-modules where $\TT_{N-1}(R)$ is the ring of triangular matrices of order $N-1$ with entries in $R$. We also define the notions of $N$-singularity category and $N$-totally acyclic complexes. We show that the category of $N$-totally acyclic complexes of  finitely generated projective $R$-modules embeds in the $N$-singularity category, which is a result analogous to the case of ordinary chain complexes.

\end{abstract}

\maketitle

%\tableofcontents
\section{introduction}
Given an associative unitary ring $R$, by an $N$-complex $X^\bullet$, we mean a sequence of $R$-modules and $R$-linear maps
$\cdots \rightarrow X^{n-1} \rightarrow X^n \rightarrow X^{n+1} \rightarrow \cdots $
such that composition of any $N$ consecutive maps gives the zero map.
The notion of $N$-complexes first appeared in the paper \cite{Kap96} by Kapranov. Besides their applications in theoretical physics \cite{CSW07}, \cite{Hen08}, the homological properties of $N$-complexes have become a subject of study for many authors as in \cite{DV98}, \cite{Est07}, \cite{Gil12}, \cite{GH10}, \cite{Tik02}.
Iyama and et. al. studied the homotopy category $\K_N(\mathcal B)$ of $N$-complexes of an additive category $\mathcal B$ as well as the derived category $\D_N(\mathcal A)$ of an abelian category $\mathcal A$. Recall that an abelian category $\mathcal{A}$ is an $(Ab 4)$-category(resp. $(Ab 4)^\ast$-category) provided that it has any coproduct(resp. product) of objects, and that the coproduct(resp. product) of monomorphisms(resp. epimorphisms) is monic(resp. epic). 
In the paper, \cite{IKM14}, they showed that the well known equivalences between homotopy category of chain complexes and their derived categories also generalize to the case of $N$-complexes. More precisely, if $\mathcal A$ is an abelian category satisfying the condition $(Ab4)$, then we have triangle equivalence
$$\K_N^\natural (\Proj \mathcal A) \cong \D_N^\sharp(\mathcal A).$$
where $(\natural,\sharp) = (proj, nothing), (-,-), ((-,b),b)$ and $\Proj \mathcal A$ is the category of projective objects of $\mathcal A$. As for chain complexes a similar statement is also true for the category $\Inj \mathcal A$ of injective objects of $\mathcal A$ provided that $\mathcal A$ satisfies the condition $(Ab 4)^\ast$.
They also showed that there exists a triangle equivalence
\begin{equation}
\label{eq-000}
\D_N(\mathcal A)\cong \D(\TT_{N-1}(\mathcal A)).
\end{equation}
As a consequence of this equivalence they showed that there exists the following triangle equivalences between derived and homotopy categories.
\begin{corollary} \label{cor-01}
For a ring $R$, we have the following triangle equivalences.
$$\D_N^\natural(\Mod R) \cong \D^\natural(\Mod \TT_{N-1}(R)),$$
where $\natural =$ $-,b$.
$$\K_N^\natural(\Proj R) \cong \K^\natural(\Proj \TT_{N-1}(R)),$$
where $\natural = -,b,(-,b)$ and also
$$\K_N^\natural(\Projj R) \cong \K^\natural(\Projj \TT_{N-1}(R)),$$
where $\natural = -,b,(-,b).$
\end{corollary}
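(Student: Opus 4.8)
The plan is to assemble every equivalence from three already-available triangle equivalences and then cut down to the indicated full subcategories. First I would upgrade \eqref{eq-000}: identifying the triangular matrix category $\TT_{N-1}(\Mod R)$ with $\Mod\TT_{N-1}(R)$ (and $\TT_{N-1}(\mmod R)$ with $\mmod\TT_{N-1}(R)$, using that $\TT_{N-1}(R)$ is Noetherian whenever $R$ is) turns \eqref{eq-000} into $\D_N(\Mod R)\cong\D(\Mod\TT_{N-1}(R))$. The equivalence \eqref{eq-000} of \cite{IKM14} is realized by an explicit functor on the level of (complexes of) representations; the features I need are that it has finite amplitude in the grading and intertwines the homology functors $\HH_{(r)}$ ($0<r<N$) of $N$-complexes with the ordinary homology of complexes of $\TT_{N-1}(R)$-modules. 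Hence it and its quasi-inverse preserve ``bounded homology'' and ``bounded-above homology'', so \eqref{eq-000} restricts to triangle equivalences $\D_N^-(\Mod R)\cong\D^-(\Mod\TT_{N-1}(R))$ and $\D_N^b(\Mod R)\cong\D^b(\Mod\TT_{N-1}(R))$, and the same over $\mmod$; this is the first displayed line.

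Next, for $\natural=-$ and $\natural=(-,b)$ in the remaining two lines, I would compose the equivalence just obtained with the \cite{IKM14} equivalences $\K_N^-(\Proj\mathcal A)\cong\D_N^-(\mathcal A)$ and $\K_N^{(-,b)}(\Proj\mathcal A)\cong\D_N^b(\mathcal A)$ (valid for $(Ab 4)$ categories, in particular $\mathcal A=\Mod R$) together with the classical equivalences $\K^-(\Proj S)\cong\D^-(\Mod S)$, $\K^{(-,b)}(\Proj S)\cong\D^b(\Mod S)$ with $S=\TT_{N-1}(R)$:
\[
\K_N^-(\Proj R)\cong\D_N^-(\Mod R)\cong\D^-(\Mod\TT_{N-1}(R))\cong\K^-(\Proj\TT_{N-1}(R)),
\]
and analogously $\K_N^{(-,b)}(\Proj R)\cong\D_N^b(\Mod R)\cong\D^b(\Mod\TT_{N-1}(R))\cong\K^{(-,b)}(\Proj\TT_{N-1}(R))$. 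For the $\Projj$ versions the \cite{IKM14} input is replaced by the $N$-complex analogue of the classical $\K^-(\Projj S)\cong\D^-(\mmod S)$ over a Noetherian ring, proved by the same resolution argument.

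The case $\natural=b$ needs real work, since $\K_N^b(\Proj R)$ is not of the form ``homotopy category of projectives $\cong$ derived category'': here I would restrict the equivalence $\Phi\colon\K_N^-(\Proj R)\xrightarrow{\sim}\K^-(\Proj\TT_{N-1}(R))$ built above to bounded objects. Two ingredients: (a) inside $\K^-(\Proj S)$ the subcategory $\K^b(\Proj S)$ is thick and is generated, as a thick subcategory, by the stalk complexes $P[n]$ with $P$ projective; and (b) $\Phi$ and $\Phi^{-1}$ carry each such stalk into a bounded complex of projectives, because a projective $R$-module placed in a single degree of an $N$-complex corresponds under \eqref{eq-000} to a $\TT_{N-1}(R)$-complex of finite projective dimension --- projective $R$-modules having finite projective dimension over $\TT_{N-1}(R)$ precisely because the quiver underlying the triangular matrix ring is directed --- and symmetrically in the other direction. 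Combining (a) and (b), $\Phi$ restricts to a triangle equivalence $\K_N^b(\Proj R)\cong\K^b(\Proj\TT_{N-1}(R))$; the finitely generated version is identical. The main obstacle is point (b): one must extract from the construction of \eqref{eq-000} in \cite{IKM14} enough information to see that it, and its quasi-inverse, send finite-amplitude degreewise-projective $N$-complexes to objects represented by bounded complexes of projective $\TT_{N-1}(R)$-modules. Granting \eqref{eq-000} together with the two families of projective-versus-derived equivalences, everything else is formal manipulation with triangulated categories.
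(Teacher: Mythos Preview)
Your proposal is sound and matches the approach the paper attributes to \cite{IKM14}. Note that the paper does not give its own proof of Corollary~\ref{cor-01}: it is stated in the introduction as a consequence, due to Iyama--Kato--Miyachi, of the equivalence \eqref{eq-000}, and the paper only revisits it after Theorem~\ref{theorem 11}, where the equivalences are recovered by restricting the explicit functor $\mathbf{F}$ and again citing \cite[Corollaries 2.17, 4.11, 4.15]{IKM14} for the details. Your outline---pass through the derived equivalence \eqref{eq-000}, use the $N$-complex analogues of $\K^-(\Proj)\cong\D^-$ and $\K^{-,\bb}(\Proj)\cong\D^{\bb}$, and handle $\natural=\bb$ by restricting the $\natural=-$ equivalence to the thick subcategory generated by stalk projectives---is exactly this strategy, and the obstacle you flag (that the functor and its quasi-inverse carry stalk projectives into bounded complexes of projectives) is precisely what the paper verifies concretely in the proof of Theorem~\ref{theorem 11} when it shows each $\mathcal{R}_i^\bullet$ lies in the essential image of $\mathbf{F}$ restricted to bounded $N$-complexes.

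One small sharpening: for the reverse direction of your point (b) you can avoid invoking ``finite projective dimension over $\TT_{N-1}(R)$'' abstractly and instead use directly that the explicit functor realizing \eqref{eq-000} (or the paper's $\mathbf{F}$) has bounded amplitude in both directions, which is visible from its construction; this is how the paper's diagram after Theorem~\ref{theorem 11} is justified.
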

In this paper, we show that the homotopy category $\K_N(\Proj(R))$ of $N$-complexes is embedded in the ordinary homotopy category $\K(\Proj \TT_{N-1}(R))$. Having this embedding in hand we are able recover (\ref{eq-000}) by using different techniques than those in \cite{IKM14}. We also show that $\K_N(\Proj(R))$ is equivalent to $\K(\Proj \TT_{N-1}(R))$ whenever $R$ is a left coherent ring. 

The explicit construction of such triangle equivalence allows us to prove an $N$-complex version of the following equivalence of triangulated categories given in \cite{Buc87}, \cite{Hap91}, \cite{BJO14}.
$$\K_{\tac}(\Projj R) \rightarrow \D_{\sg}^{\bb} (R)$$
where $\K_{\tac}(\Projj R)$ is the homotopy category of totally acyclic complexes of finitely generated projective $R$-modules and $\D_{\sg}^{\bb} (R)$ is the singularity category.

The paper is organized as follows. In section \ref{preliminaries}, we recall some generalities on $N$-complexes and provide any background information needed through this paper.
Our main result appears in section \ref{Some triangle equivalences between homotopy categories} as Theorem \ref{theorem 02}. In that section, we show that the category $\K_N(\Proj R)$ embeds as a triangulated subcategory in the category $\K(\Proj \TT_{N-1}(R))$ see proposition \ref{proposition 10}. As an application of this embedding we provide a different proof for the triangle equivalence in (\ref{eq-000}). At the end of this section we show that this embedding is also dense, hence an equivalence.

In section \ref{N-totally acyclic complexes} we define an $N$-totally acyclic complex as a complex $X^\bullet$ in $\Projj R$ satisfying the property that for all $P^\bullet \in \K^{\bb}_N(\Projj R)$, $\Hom_{K_{N}(\Projj R)} (P^\bullet,X^\bullet)= \Hom_{\K_{N}(\Projj R)} (X^\bullet, P^\bullet)=0$. Then we show that the homotopy category $\K_N^{\tac}(\Projj R)$ of $N$-totally acyclic complexes in $\Projj R$ in this sense is triangle equivalent to the homotopy category of ordinary totally acyclic complexes in $\Projj \TT_{N-1}(R)$, i.e. $\K_{\tac}(\Projj \TT_{N-1}(R))$. We also define a similar notion of singularity category for $N$-complexes $\D^{\sg}_N(R)$ and show that it contains $\K_N^{\tac}(\Projj R)$ as a triangulated subcategory. Furthermore, the embedding
$$\K_N^{\tac}(\Projj R) \rightarrow \D_N^{\sg}(R)$$
is an equivalence of triangulated categories, when $R$ is a Gorenstein ring.
\section{preliminaries}
\label{preliminaries}
\subsection{The category of $N$-complexes}
Throughout, $R$ is an associative ring with identity. $\Mod R$ denotes the category of all
right $R$-modules. We fix a positive integer $N\geq 2$. An $N$-complex $X^\bullet$ is a diagram
$$\xymatrix{\cdots \ar[r]^{{d}^{i-1}_{X^\bullet}} & X^{i} \ar[r]^{{d}^{i}_{X^\bullet}} &X^{i+1} \ar[r]^{{d}^{i+1}_{X^\bullet}}  & \cdots }$$ 
with $X^i \in \Mod R$ and morphisms $d^{i}_{X^\bullet} \in \Hom_{R}(X^i,X^{i+1})$ satisfying $d^N=0$. That is, composition of any $N$-consecutive maps is 0. A morphism between $N$-complexes is a commutative diagram
$$\xymatrix{\cdots \ar[r]^{{d}^{i-1}_{X^\bullet}} & X^{i} \ar[r]^{{d}^{i}_{X^\bullet}} \ar[d]^{f^i} & X^{i+1} \ar[r]^{{d}^{i+1}_{X^\bullet}} \ar[d]^{f^{i+1}}  & \cdots \\ \cdots \ar[r]^{{d}^{i-1}_{Y^\bullet}} & Y^{i} \ar[r]^{{d}^{i}_{Y^\bullet}}& Y^{i+1} \ar[r]^{{d}^{i+1}_{Y^\bullet}} & \cdots  }$$
We denote by $\C_N(R)$(resp. $\C^{-}_{N}(R)$, $\C^{+}_{N}(R)$, $\C^{b}_{N}(R)$) the category of unbounded (resp. bounded above, bounded below, bounded) $N$-complexes over $\Mod R$.
\\
For any object $M$ of $\Mod R$, $j\in \Z$ and $1\leq i \leq N$, let
$$\xymatrixcolsep{2.20pc}\xymatrix{  D^{j}_{i}(M): \cdots \ar[r] & 0 \ar[r] & X^{j-i+1} \ar[r]^{\,\,\,\,{d}^{j-i+1}_{X^\bullet}} & \cdots \ar[r]^{{d}^{j-2}_{X^\bullet}} & X^{j-1} \ar[r]^{{d}^{j-1}_{X^\bullet}}  & X^j \ar[r] &0 \ar[r] & \cdots }$$
be an $N$-complex satisfying $X^n=M$ for all $j-i+1\leq n \leq j$ and $d^{n}_{X^\bullet}=1_M$ for all $j-i+1\leq n \leq j-1$.
\\
For $0\leq r<N$ and $i\in \mathbb{Z}$, we define
$$ d_{X^\bullet,\lbrace r\rbrace}^{i}:=d_{X^\bullet}^{i+r-1} \cdots d_{X^\bullet}^{i} $$
In this notation $d_{X^\bullet,\lbrace 1\rbrace}^i=d^i_{X^\bullet}$ and $d_{X^\bullet,\lbrace 0\rbrace}^i=1_{X_i}$.
\begin{definition}
Let $f:X^\bullet\longrightarrow Y^\bullet$ be a morphism in $\C_N(R)$. The mapping cone $C(f)$ of $f$ is defined as follows
$$C(f)^m=Y^m \oplus {\coprod}_{i=m+1}^{m+N-1}X^i, \quad d^{m}_{C(f)}=
\left[ \begin{array}{cccccc}

d_{Y^\bullet}^{m} & f^{m+1} & 0 & 0  &\cdots & 0 \\
0 & 0 & 1 & \ddots & \ddots & \vdots \\
  \vdots & \vdots & \ddots & \ddots & \ddots & 0 \\
   & 0 & \cdots &  & 1 & 0 \\
    0 & 0 & \cdots & \cdots & 0 & 1 \\
     0 & -d_{X^\bullet ,\lbrace N-1\rbrace}^{m+1} & -d_{X^\bullet ,\lbrace N-2\rbrace}^{m+2} & \cdots & \cdots & -d_{X^\bullet}^{m+N-1} \\
\end{array} \right]  $$
\end{definition}
Let $\mathcal{S}_N(R)$ be the collection of short exact sequences in $\C_N(R)$ of which each term is split short exact in $R$. Then it is easy to see that a category $(\C_N(R),\mathcal{S}_N(R))$ is an exact category such that for every $M\in \Mod R$ and every $i\in \Z$, $D_{N}^{-i+N-1}(M)$ is an $\mathcal{S}_N$-projective and $\mathcal{S}_N$-injective object of this category. Hence this category is a Frobenius category, See \cite[Proposition 1.5]{IKM14}.
\begin{definition}
A morphism $f:X^\bullet \longrightarrow Y^\bullet$ of $N$-complexes is called null-homotopic if there exists $s^i \in \Hom_{R}(X^i,Y^{i-N+1})$ such that
\[ f^i=\sum_{j=0}^{N-1} d_{Y^\bullet, \lbrace N-1-j\rbrace}^{i-(N-1-j)}s^{i+j}d^{i}_{X^\bullet, \lbrace j\rbrace}\]
 We denote the homotopy category of unbounded $N$-complexes by $\K_N(R)$.
\end{definition}
\begin{definition}
For $X^\bullet=(X^i,d^i)\in \C_N(R)$, we define a shift functor $\Theta:\C_N(R)\rightarrow \C_N(R)$ by 
$$\Theta(X^\bullet)^i=X^{i+1}, \qquad \Theta(d)^i=d^{i+1} .$$ 
 We also define suspension functor $\Sigma:\K_N(R)\longrightarrow \K_N(R)$ as follows
$$(\Sigma X^\bullet)^m={\coprod}_{i=m+1}^{m+N-1}X^i, \qquad d^{m}_{\Sigma X^\bullet}=
\left[ \begin{array}{cccccc}

0 & 1 & 0 & 0  &\cdots & 0 \\
 & 0 & \ddots & \ddots & \ddots & \vdots \\
  \vdots & \vdots & \ddots & \ddots & \ddots & 0 \\
   &  &  & \ddots & \ddots & 0 \\
    0 & 0 & \cdots & \cdots & 0 & 1 \\
     -d_{\lbrace N-1\rbrace}^{m+1} & -d_{\lbrace N-2\rbrace}^{m+2} & \cdots & \cdots & \cdot & -d^{m+N-1} \\
\end{array} \right]     
$$
$$(\Sigma^{-1} X^\bullet)^m={\coprod}^{i=m-1}_{m-N+1}X^i, \qquad d^{m}_{\Sigma^{-1} X^\bullet}=
\left[ \begin{array}{cccccc}

-d^{m-1} & 1 & 0 & \cdots  &\cdots & 0 \\
-d_{\lbrace 2\rbrace}^{m-1} & 0 & 1 & \ddots & \ddots & \vdots \\
  \vdots & \vdots & \ddots & \ddots & \ddots & 0 \\
   &  &  & \ddots & \ddots & 0 \\
    -d_{\lbrace N-2\rbrace}^{m-1} & 0 & \cdots & \cdots & 0 & 1 \\
     -d_{\lbrace N-1\rbrace}^{m-1} & 0 & \cdots & \cdots & \cdot & 0 \\
\end{array} \right]     
$$
\end{definition}
It is known that $\K_N(R)$  together with this suspension functor is a triangulated category, see  \cite[Theorem 1.7]{IKM14}.
\\
Let $X^\bullet$,
$$\xymatrix{\cdots \ar[r]^{{d}^{i-1}_{X^\bullet}} & X^{i} \ar[r]^{{d}^{i}_{X^\bullet}} &X^{i+1} \ar[r]^{{d}^{i+1}_{X^\bullet}}  & \cdots }$$
be an $N$-complex of $R$-modules. We define
$$\ZE^{i}_{r}(X^\bullet):= \Ker d^{i}_{X^\bullet,\lbrace r \rbrace} \quad , \quad \BE^{i}_{r}(X^\bullet):= \im d^{i-r}_{X^\bullet, \lbrace r \rbrace}
$$
$$
\CK^{i}_{r}(X^\bullet):= \Coker d^{i-r}_{X^\bullet,\lbrace r \rbrace} \quad , \quad \HE^{i}_{r}(X^\bullet):= \ZE^{i}_{r}(X^\bullet)/ \BE^{i}_{N-r}(X^\bullet). 
$$
In each degree we have $N-1$ cycles and clearly $\ZE^{n}_{N}(X^\bullet)=X^n$. 
\begin{remark}
\label{remark 01}
For any $X^\bullet\in \C_N(R)$ if $\HE^i_1(X^\bullet)=0$ for any $i\in \mathbb{Z}$, then we have $\HE^i_r(X^\bullet)=0$ for any $i\in \mathbb{Z}$ and $0<r<N$.
\end{remark}
\begin{definition}
Let $X^\bullet \in \K_N(R)$. We say $X^\bullet$ is $N$-exact if $\HE^{i}_{r}(X^\bullet)=0$
for each $i \in \mathbb{Z}$ and all $r=1,2,...,N-1$. We denote the full subcategory of $\K_N(R)$ consisting of $N$-exact complexes by $\K_{N}^{\ac}(R)$ . 
\end{definition}
For a full subcategory $\mathcal{B}$ of $\Mod R$, we denote by $\K_N^{\natural,\bb}(\mathcal{B})$ the full subcategory of $\K_N^{\natural}(\mathcal{B})$ consisting of $N$-complexes $X^\bullet$ satisfying $\HH^i_r(X^\bullet)=0$ for almost all but finitely many $i$ and $r$, where $\natural= nothing,-,+$.
\begin{definition}
A morphism $f:X^\bullet \rightarrow Y^\bullet$ is called quasi-isomorphism if the induced morphism $\HE^{i}_{r}(f):\HE^{i}_{r}(X^\bullet)\rightarrow \HE^{i}_{r}(Y^\bullet)$ is an isomorphism for any $i$ and $1\leq r\leq N-1$, or equivalently if the mapping cone $C(f)$ belongs to $\K_{N}^{\ac}(R)$. The derived category $\D_N(R)$ of $N$-complexes is defined as the quotient category $\K_{N}(R)/ \K_{N}^{\ac}(R)$. 
\end{definition}
\subsection{Triangular matrix ring}
Let $\mathbb{M}_n(R)$ be the set of all $n\times n$ square matrices with coefficients in $R$ for $n\in \mathbb{N}$. $\mathbb{M}_n(R)$ is a ring with respect to the
usual matrix addition and multiplication. The identity of $\mathbb{M}_n(R)$ is the
matrix $E = diag(1, . . . , 1)\in \mathbb{M}_n(R)$ with 1 on the main diagonal and zeros
elsewhere. The subset
$$\mathbb{T}_n(R)=
\left[ \begin{array}{cccc}

R & 0 & \cdots & 0 \\
R & R & \cdots & 0 \\
  \vdots & \vdots & \ddots & \vdots \\
     R & R & \cdots & R \\
\end{array} \right]     
$$
of $\mathbb{M}_n(R)$ consisting of all triangular matrices $[a_{ij}]$ in $\mathbb{M}_n(R)$ with zeros
over the main diagonal is a subring of $\mathbb{M}_n(R)$. It is well known that if $Q$ is the quiver 
$$\xymatrix{ A_n = 1 \ar[r] & 2 \ar[r] &3 \ar[r]  & \cdots \ar[r]& n  }$$ then $RQ\cong \mathbb{T}_n(R)$, where $RQ$ is a path algebra of quiver $Q$. 
\\
Let $Q=(V,E)$ be a quiver. A representation of $Q$ by  a ring $R$ is a correspondence which associates an object $M_v$ to each vertex $v$ and a morphism $\varphi_a:M_{s(a)}\rightarrow M_{t(a)}$ to each arrow $a\in E$. Let $\mathcal{X}$ and  $\mathcal{Y}$ be two representations by left $R$-modules of the quiver $Q$. A morphism $f:\mathcal{X}\rightarrow \mathcal{Y}$ is a family of homomorphisms $f_v:\mathcal{X}_v\rightarrow \mathcal{Y}_v$ shch that $\mathcal{Y}_a \circ f_v=f_w \circ \mathcal{X}_a$ for any arrow $a:v\rightarrow w$. The representations of $Q$ by $R$-modules and $R$-homomorphisms form a category denoted by $\QR$.
\\
It is known that the category $\QR$ is equivalent to the category of modules over path algebra $RQ$ whenever $Q$ is finite quiver.
\\
Set $Q=A_n$. For $1\leq i\leq n$, let $e^i: \QR\rightarrow \Mod R$ be the evaluation functor defined by $e^i(\mathcal{X})=\mathcal{X}_i$, for any $\mathcal{X}\in \QR$. It is proved in \cite{EH99} that $e^i$ has a right adjoint $e^i_{\rho}:\Mod R\rightarrow \QR$, where $e^i_{\rho}(M)$ is the following representation 
$$\xymatrix{  M \ar[r] & M \ar[r] &\cdots \ar[r] & M \ar[r] & 0 \ar[r] & \cdots \ar[r]& 0  } $$
for the $R$-module $M$, where $M$ ends in $i$-th position with identity morphisms beforehand. Moreover, for $1\leq j\leq n$, it is shown that $e^j$ also admits a left adjoint $e^j_{\lambda}$, defined by $e^j_{\lambda}(M)$ as follows:
$$\xymatrix{  0 \ar[r] & 0 \ar[r] &\cdots \ar[r] & M \ar[r] & M \ar[r] & \cdots \ar[r]& M  } $$
for the $R$-module $M$, where $M$ starts in $j$-th position with identity morphisms afterward. It is proved in \cite{EER09} that any projective(resp. injective) representation $\mathcal{X}$ in $\QR$ is of the form $\bigoplus_{i=1}^n e^i_{\lambda}(P^i)$(resp. $\bigoplus_{i=1}^n e^i_{\rho}(I^i)$), where for any $1\leq i\leq n$, $P^i$(resp. $I^i$), is the cokernel(resp. kernel) of the split monomorphism $\mathcal{X}^{i-1}\rightarrow \mathcal{X}^i$(resp. epimorphism $\mathcal{X}^{i}\rightarrow \mathcal{X}^{i+1}$). Hence any projective object in $\Mod \mathbb{T}_n(R)$ is of the form 
$$P^1\rightarrow P^1\oplus P^2 \rightarrow P^1\oplus P^2 \oplus P^3 \rightarrow \cdots \rightarrow P^1\oplus P^2 \oplus \cdots \oplus P^n$$
and an injective object in $\Mod \mathbb{T}_n(R)$ is of the form 
$$I^1\oplus I^2 \oplus \cdots \oplus I^n\rightarrow I^1\oplus I^2 \oplus \cdots \oplus I^{n-1} \rightarrow \cdots \rightarrow I^1\oplus I^2 \rightarrow  I^1$$
\section{Some triangle equivalences between homotopy categories }
\label{Some triangle equivalences between homotopy categories}
In this section, we show that the homotopy category $\K_N(\Proj(R))$ of $N$-complexes is embedded in the ordinary homotopy category $\K(\Proj \TT_{N-1}(R))$. As a result of this embedding we show that there exists a triangle equivalence between derived category of $N$-complexes and ordinary derived category of complexes of $\Mod \TT_{N-1}(R)$. At the end of this section we show that $\K_N(\Proj(R))\cong \K(\Proj \TT_{N-1}(R))$.  
\\
Let $\mathcal{S}_N(R)$ be the collection of short exact sequence in $\C_N(R)$ of which each term is split exact then it is shown in \cite{IKM14} that $(\C_N(R),\mathcal{S}_N(R))$ is a Frobenius category. We need the following definition and lemma from \cite{Hap88}.
\begin{definition}
Let $(\mathcal{B},\mathcal{S})$ and $(\mathcal{B}',\mathcal{S}')$ be Frobenius categories. An additive
functor $F:\mathcal{B}\longrightarrow \mathcal{B}'$ is called exact if $\xymatrix{0 \ar[r] & F(X) \ar[r]^{F(u)} & F(Y) \ar[r]^{F(v)}  & F(Z) \ar[r] & 0 }$ is contained in $\mathcal{S}'$ whenever $\xymatrix{0 \ar[r] & X \ar[r]^{u} & Y \ar[r]^{v}  & Z \ar[r] & 0 }$ is contained in $\mathcal{S}$. 

\vspace{1mm}
If $F$ transforms $\mathcal{S}$-injectives
into $\mathcal{S}'$-injectives then $F$ induces a functor $\underline{F}:\underline{\mathcal{B}}\longrightarrow \underline{\mathcal{B}'}$. Denote by $T$ (resp. $T'$) the translation
functor on stable category $\underline{\mathcal{B}}$ (resp. $\underline{\mathcal{B}'}$).
\end{definition} 
\begin{lemma}
Let $F$ be an exact functor between Frobenius categories
$\mathcal{B}$ and $\mathcal{B}'$ such that $F$ transforms $\mathcal{S}$-injectives into $\mathcal{S}'$-injectives.
If there exists an invertible natural transformation
$\alpha : FT \longrightarrow T'F $ then $F$ is an exact functor of triangulated categories.
\end{lemma}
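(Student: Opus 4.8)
The statement to prove is the lemma from Happel: an exact functor $F$ between Frobenius categories that preserves injectives, together with an invertible natural transformation $\alpha\colon FT \to T'F$, yields an exact functor of triangulated categories.

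Let me think about the proof plan.

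\textbf{The approach.} Recall how the triangulated structure on a stable category $\underline{\mathcal B}$ of a Frobenius category is defined. Given an object $X$, choose a conflation (admissible short exact sequence) $0 \to X \to I(X) \to T(X) \to 0$ with $I(X)$ injective. A standard triangle is obtained from a conflation $0 \to X \xrightarrow{u} Y \xrightarrow{v} Z \to 0$ in $\mathcal B$: there is an induced map $w\colon Z \to T(X)$ (unique up to homotopy in the stable category, coming from comparing the conflation with the chosen one defining $T(X)$), and the distinguished triangles in $\underline{\mathcal B}$ are those isomorphic to $X \xrightarrow{\underline u} Y \xrightarrow{\underline v} Z \xrightarrow{\underline w} T(X)$.

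So to show $\underline F$ is a triangulated functor (= exact functor of triangulated categories), I need: (1) $\underline F$ is additive and commutes with the translation up to the fixed natural isomorphism $\alpha$, which is given; (2) $\underline F$ sends distinguished triangles to distinguished triangles.

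\textbf{Key steps.}

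Step 1: Since $F$ is exact (in the sense of the definition just given — preserves conflations) and preserves injectives, it descends to a well-defined additive functor $\underline F\colon \underline{\mathcal B} \to \underline{\mathcal B'}$. (This is stated in the definition, so I can just cite it.)

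Step 2: Take a distinguished triangle in $\underline{\mathcal B}$. Without loss of generality it comes from a conflation $0 \to X \xrightarrow{u} Y \xrightarrow{v} Z \to 0$ with connecting map $w\colon Z \to T(X)$ built from a chosen injective conflation $0 \to X \to I(X) \to TX \to 0$ via a chosen lift. Apply $F$: get a conflation $0 \to FX \xrightarrow{Fu} FY \xrightarrow{Fv} FZ \to 0$ in $\mathcal B'$, and $0 \to FX \to F I(X) \to F T X \to 0$ is a conflation with $FI(X)$ injective in $\mathcal B'$ (by the two hypotheses on $F$).

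Step 3: Now $0 \to FX \to FI(X) \to FTX \to 0$ is a conflation with injective middle term, so $FTX$ "is" $T'(FX)$ — more precisely, there's a canonical isomorphism in $\underline{\mathcal B'}$ between $FTX$ and $T'FX$ obtained by comparing this conflation with the chosen one $0 \to FX \to I'(FX) \to T'FX \to 0$. I need to check this canonical isomorphism equals (or is compatible with) $\alpha_X\colon FTX \to T'FX$. This is where the naturality of $\alpha$ and a compatibility have to be invoked — essentially $\alpha$ is determined on injective conflations because $T$ and $T'$ are. Actually the cleanest route: the distinguished triangle in $\underline{\mathcal B'}$ generated by the conflation $0 \to FX \xrightarrow{Fu} FY \xrightarrow{Fv} FZ \to 0$ is $FX \to FY \to FZ \xrightarrow{\partial} T'FX$ where $\partial$ is obtained from the chosen injective conflation; by functoriality of the construction of the connecting morphism, $\partial = \alpha_X \circ F(w)$ up to the identification, using that $F$ applied to the lift $Y \to I(X)$ gives a lift $FY \to FI(X)$ and that $\alpha$ intertwines the resulting maps to $T'FX$.

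Step 4: Conclude that $\underline F$ applied to the distinguished triangle $X \xrightarrow{\underline u} Y \xrightarrow{\underline v} Z \xrightarrow{\underline w} TX$ gives $FX \xrightarrow{F\underline u} FY \xrightarrow{F\underline v} FZ \xrightarrow{\alpha_X \circ F\underline w} T'FX$, which is exactly a distinguished triangle in $\underline{\mathcal B'}$ (it is the one attached to $F$ of the conflation, with the connecting map transported along $\alpha$). Since every distinguished triangle is isomorphic to one of this form and $\underline F$ respects isomorphisms, $\underline F$ sends distinguished triangles to distinguished triangles. Together with Step 1 and the given $\alpha$, this says $(\underline F, \alpha)$ is an exact functor of triangulated categories.

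\textbf{Main obstacle.} The only real content is Step 3: verifying that the connecting morphism of the image conflation agrees, after transport along $\alpha$, with $F$ of the original connecting morphism. This amounts to checking that the construction "conflation $\mapsto$ connecting map into $T(-)$" is natural with respect to exact functors that commute with translation — i.e., that $\alpha$ being a natural transformation of functors automatically makes it compatible with the chosen defining conflations for $T$ and $T'$ (any two choices of such conflations give canonically isomorphic results, and $\alpha$ must agree with that canonical isomorphism up to the standard coherence). This is a diagram chase in the stable category using the fact that maps out of an object into an injective-defined cokernel are computed by lifting against the injective, plus naturality of $\alpha$. Everything else is bookkeeping.
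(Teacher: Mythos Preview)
The paper does not prove this lemma; it is quoted from Happel \cite{Hap88} without argument, so there is no in-paper proof to compare against. Your outline is exactly Happel's standard proof.

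One remark on the obstacle you flag in Step~3. You ask whether the given $\alpha_X$ must agree with the canonical comparison isomorphism $\beta_X\colon FTX \to T'FX$ obtained by comparing the conflation $0\to FX\to FI(X)\to FTX\to 0$ (middle term $\mathcal S'$-injective by hypothesis) with the chosen conflation $0\to FX\to I'(FX)\to T'FX\to 0$. Your worry is justified: for an \emph{arbitrary} natural isomorphism $\alpha$ the pair $(\underline F,\alpha)$ need not be exact (for instance $\alpha=-\beta$ generally fails). What Happel actually shows is that $(\underline F,\beta)$ is exact, and the hypothesis ``there exists $\alpha$'' is in fact redundant, since such a $\beta$ always exists once $F$ is exact and preserves injectives. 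Read with $\alpha=\beta$, your Step~3 becomes a tautology: $F$ applied to a lift $Y\to I(X)$ is a lift $FY\to FI(X)$, and $\beta_X$ is by definition the map identifying the two models of the translation, so the connecting morphism of the image conflation is $\beta_X\circ Fw$ on the nose. The rest of your argument then goes through without further chase.
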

In order to show that $\K_N(\Proj(R))$ embeds in $\K(\Proj \TT_{N-1}(R))$, we explicitly construct the embedding functor.
\begin{construction}
\label{construction 01}
Define the functor $\mathbf{F}:\C_N(\Proj R)\longrightarrow \C(\Proj \TT_{N-1}(R))$ by the following rules.

\vspace{2mm}
\item[\textbf{On objects:}]
Let $(P^\bullet,d^\bullet)$ be an object in $\C_N(\Proj R)$. Define $i$-th term of $\mathbf{F}(P^\bullet)$ as follows

\vspace{1mm}
\begin{itemize}
\item
For $i=2r$, let $m=Nr$ and define $\mathbf{F}(P^\bullet)^i$  as the following projective representation of $A_{N-1}$: 
\end{itemize} 
\begin{small}
$$
\xymatrix{
 P^m \ar[r]  & P^m\oplus P^{m+1} \ar[r]  & \cdots \ar[r]  & P^m\oplus P^{m+1}\oplus \cdots \oplus P^{m+N-3} \ar[r]  & P^m\oplus \cdots \oplus P^{m+N-2}   
 }
 $$
\end{small} 

\vspace{1mm}
\begin{itemize}
\item
For $i=2r+1$, let $m=Nr$ and define $\mathbf{F}(P^\bullet)^i$  as the following projective representation of $A_{N-1}$:
\end{itemize}
\begin{tiny}
$$
\xymatrix{
P^{m+N-1} \ar[r]  & P^{m+N-1}\oplus P^{m+N}\ar[r]  & \cdots \ar[r] & P^{m+N-1}\oplus P^{m+N}\oplus \cdots \oplus P^{m+2N-4} \ar[r]  & P^{m+N-1}\oplus \cdots \oplus P^{m+2N-3}  
}
$$
\end{tiny}

\vspace{1mm}
For the definition of differential of $\mathbf{F}(P^\bullet)$, we consider the following two cases:
\begin{itemize}
\item[(i)] For $i=2r$, define $\mu^i:\mathbf{F}(P^\bullet)^i\rightarrow \mathbf{F}(P^\bullet)^{i+1}$ by $\mu^i=(\mu^i_j)_{1 \leq j \leq N-1}$ where
\end{itemize} 

\vspace{1mm}
$$\mu_{j}^{i}=
\left[ \begin{array}{ccccc}

d_{\lbrace N-1\rbrace}^m & d_{\lbrace N-2\rbrace}^{m+1}  & \cdots  & d_{\lbrace N-j\rbrace}^{m+j-1} \\
 0 & d_{\lbrace N-1\rbrace}^{m+1} & \cdots  & d_{\lbrace N-j+1\rbrace}^{m+j-1} \\
  0 & 0 &  &  &  \\
    \vdots & \vdots & \ddots  &\vdots  \\
     &  &    &  \\
     0 & 0 & \cdots  & d_{\lbrace N-1\rbrace}^{m+j-1} \\
\end{array} \right]     
$$
\begin{itemize}

\vspace{3mm}
\item[(ii)] For $i=2r+1$, define $\lambda^i:\mathbf{F}(P^\bullet)^i\rightarrow \mathbf{F}(P^\bullet)^{i+1}$ by $\lambda^i=(\lambda^i_j)_{1 \leq j \leq N-1}$ where
\end{itemize}

\vspace{1mm}
$$\lambda_{j}^{i}=
\left[ \begin{array}{cccccc}
d^{m+N-1} & -1 & 0 & \cdots & \cdots & 0 \\
 0 & d^{m+N} & -1 & 0 & \cdots & 0 \\
  0 & 0 & d^{m+N+1} & -1 & \cdots & 0  \\
  \vdots & \vdots & & \ddots  &\ddots & \vdots  \\
 \vdots & \vdots &    & & & -1 \\
     0 & 0 & \cdots  & \cdots & 0 & d^{m+N+j-2} \\
\end{array} \right]     
$$
\item[\textbf{On morphisms:}]Let $f^\bullet:Q^\bullet \longrightarrow P^\bullet $ be a morphism in $\C_N(\Proj R)$. We define $\mathbf{F}(f^\bullet)$ as follows:
\\
\begin{itemize}
\item[(i)] For $i=2r$, let $m=Nr$. Define $\varphi^i:\mathbf{F}(Q^\bullet)^i\longrightarrow \mathbf{F}(P^\bullet)^i$ by $\varphi^i=(\varphi^i_j)_{0\leq j \leq N-2}$ where $$\varphi^i_j=diag(f^m,...,f^{m+j})$$
\end{itemize}

\vspace{2mm}
\begin{itemize}
\item[(ii)] For $i=2r+1$,let $m=Nr$. Define $\varphi^i:\mathbf{F}(Q^\bullet)^i\longrightarrow \mathbf{F}(P^\bullet)^i$ by $\varphi^i=(\varphi^i_j)_{0\leq j \leq N-2}$ where 
$$
\varphi^i_j=diag(f^{m+N-1},...,f^{m+N-j}). $$
\end{itemize}
It is straightforward to show that this construction defines covariant functor from $\C_N(\Proj R)$ to $\C(\Proj \T_{N-1}(R))$.
\end{construction}
\begin{example}
Let $N=3$. Let $P^\bullet$ 
$$\xymatrix{P^\bullet=\cdots \ar[r]^{{d}^{-2}} & P^{-1} \ar[r]^{{d}^{-1}} & P^{0} \ar[r]^{{d}^{0}}  & P^{1} \ar[r]^{{d}^{1}} & P^{2} \ar[r]^{{d}^{2}} & P^{3} \ar[r]^{{d}^{3}} & \cdots }$$
be a $3$-complex in $\C_3(\Proj R)$.
The functor $\mathbf{F}$ maps $P^\bullet$ in to the following complex in $\C(\Proj \TT_2(R))$
\begin{small}
$$\xymatrixcolsep{4.5pc}\xymatrix{ P^{-1} \ar[r]^{d^{-1}} \ar[d]^{(1,0)} & P^0 \ar[r]^{d^1 d^0} \ar[d]^{(1,0)} & P^2 \ar[r]^{d^2} \ar[d]^{(1,0)} & P^3 \ar[r]^{d^4 d^3} \ar[d]^{(1,0)} & P^5  \ar[d]^{(1,0)}  
\\
 P^{-1}\oplus P^0 \ar[r]^{\begin{tiny} \left[\begin{array}{cc}
d^{-1} & -1  \\
 0 & d^{0}  \\
\end{array} \right] \end{tiny}} & P^{0}\oplus P^1 \ar[r]^{\begin{tiny} \left[ \begin{array}{cc}
d^1d^0 & d^1 \\
 0 & d^2d^1  \\
\end{array} \right] \end{tiny}} & P^{2}\oplus P^3 \ar[r]^{\begin{tiny} \left[ \begin{array}{cc}
d^{2} & -1  \\
 0 & d^{3}  \\
\end{array} \right] \end{tiny}} & P^{3}\oplus P^4 \ar[r]^{\begin{tiny} \left[ \begin{array}{cc}
d^4d^3 & d^4 \\
 0 & d^5d^4  \\
\end{array} \right] \end{tiny}} & P^{5}\oplus P^6
 } $$
 \end{small}
 Now consider the morphism $f^\bullet:(Q^\bullet,e^\bullet) \longrightarrow (P^\bullet,d^\bullet) $ in $\C_3(\Proj R)$ as follows:
$$\xymatrix{\cdots \ar[r]^{{e}^{-2}} & Q^{-1} \ar[r]^{{e}^{-1}} \ar[d]^{f^{-1}} & Q^{0} \ar[r]^{{e}^{0}} \ar[d]^{f^{0}} & Q^{1} \ar[r]^{{e}^{1}} \ar[d]^{f^{1}} & Q^{2} \ar[r]^{{e}^{2}} \ar[d]^{f^{2}} & Q^{3} \ar[r]^{{e}^{3}} \ar[d]^{f^{3}} & \cdots
\\ 
\cdots \ar[r]^{{d}^{-2}} & P^{-1} \ar[r]^{{d}^{-1}} & P^{0} \ar[r]^{{d}^{0}}  & P^{1} \ar[r]^{{d}^{1}} & P^{2} \ar[r]^{{d}^{2}} & P^{3} \ar[r]^{{d}^{3}} & \cdots }$$
The image of $f^\bullet$ under $\mathbf F$ is the following diagram in $\C(\Proj \TT_{2}(R))$:
\begin{tiny}
$$\xymatrix{ & Q^{-1} \ar@{.>}[rr] \ar[dl] \ar@{.>}[dd]^<<<<<<<{f^{-1}} & & Q^{0} \ar@{.>}[rr] \ar[dl] \ar@{.>}[dd]^<<<<<<<{f^{0}} & & Q^2 \ar[dl] \ar@{.>}[rr] \ar@{.>}[dd]^<<<<<<<{f^2} & & Q^{3} \ar[dl] \ar@{.>}[dd]^<<<<<<<{f^{3}}
\\ 
Q^{-1}\oplus Q^0 \ar[rr] \ar[dd]|-{\begin{tiny} \left[ \begin{array}{cc}
f^{-1} & 0  \\
 0 & f^{0}  \\
\end{array} \right] \end{tiny}} & & Q^{0}\oplus Q^1 \ar[rr] \ar[dd]|-{\begin{tiny} \left[ \begin{array}{cc}
f^{0} & 0  \\
 0 & f^{1}  \\
\end{array} \right] \end{tiny}} & & Q^{2}\oplus Q^3 \ar[dd]|-{\begin{tiny} \left[ \begin{array}{cc}
f^{2} & 0  \\
 0 & f^{3}  \\
\end{array} \right] \end{tiny}} \ar[rr] & & Q^{3}\oplus Q^4  \ar[dd]|-{\begin{tiny} \left[ \begin{array}{cc}
f^{3} & 0  \\
 0 & f^{4}  \\
\end{array} \right] \end{tiny}} & 
\\ 
 & P^{-1} \ar@{.>}[rr] \ar[dl]  & & P^{0} \ar@{.>}[rr] \ar[dl] & & P^{2} \ar[dl] \ar@{.>}[rr] & & P^{3}  \ar[dl] 
\\ 
P^{-1}\oplus P^0 \ar[rr]  & & P^{0}\oplus P^1 \ar[rr]  & & P^{2}\oplus P^3 \ar[rr] &  & P^{3}\oplus P^4  &  
}$$
\end{tiny}
\end{example}
\begin{lemma}
\label{lemma 12}
The functor $\mathbf F$, defined above, induces a functor from the category $\K_N(\Proj R)$ to the category $\K(\Proj \TT_{N-1}(R))$ which we denote it again by $\mathbf F$.
\end{lemma}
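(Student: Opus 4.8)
The plan is to show that $\mathbf F$ carries null-homotopic morphisms of $N$-complexes to null-homotopic morphisms of ordinary complexes. Since $\mathbf F$ is additive, it then factors through the canonical functors $\C_N(\Proj R)\to\K_N(\Proj R)$ and $\C(\Proj \TT_{N-1}(R))\to\K(\Proj \TT_{N-1}(R))$ (which identify exactly the null-homotopic morphisms to zero), yielding the asserted functor on homotopy categories.

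I would first recast null-homotopy in the language of the Frobenius category $(\C_N(R),\mathcal{S}_N(R))$ (and its full exact subcategory $\C_N(\Proj R)$): a morphism $f\colon X^\bullet\to Y^\bullet$ is null-homotopic precisely when it factors through an $\mathcal{S}_N$-injective object. Choosing the canonical $\mathcal{S}_N$-monomorphism $\iota_{X^\bullet}\colon X^\bullet\hookrightarrow I(X^\bullet)$, where $I(X^\bullet)$ is the standard $\mathcal{S}_N$-injective receiving $X^\bullet$, i.e. a direct sum of disk complexes $D_N^j(X^i)$, one may take the factorization to be through $\iota_{X^\bullet}$ itself: if $f=\beta\alpha$ with $\alpha\colon X^\bullet\to J$ and $J$ injective, then $\alpha$ extends along the monomorphism $\iota_{X^\bullet}$ to some $\gamma\colon I(X^\bullet)\to J$, so $f=\beta\gamma\circ\iota_{X^\bullet}$. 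When $X^\bullet$ has projective terms, $I(X^\bullet)$ is a direct sum of disks $D_N^j(P)$ with $P\in\Proj R$. Hence, for a null-homotopic $f=h\circ\iota_{X^\bullet}$, additivity of $\mathbf F$ gives $\mathbf F(f)=\mathbf F(h)\circ\mathbf F(\iota_{X^\bullet})$, which factors through $\mathbf F(I(X^\bullet))=\bigoplus_i\mathbf F\bigl(D_N^j(X^i)\bigr)$.

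Everything therefore reduces to the single claim: \emph{for each $j\in\Z$ and each projective $R$-module $P$, the complex $\mathbf F(D_N^j(P))$ is contractible in $\C(\Proj \TT_{N-1}(R))$} --- equivalently, being bounded with projective terms, a finite direct sum of disk complexes over $\TT_{N-1}(R)$. To verify this I would feed $D_N^j(P)$ --- supported in the $N$ consecutive degrees $j-N+1,\dots,j$ with every structure map equal to $1_P$ --- into Construction \ref{construction 01}: the resulting complex is supported in finitely many degrees, each term is a representation of $A_{N-1}$ built out of copies of $P$ and $0$, and since every composite $d_{\{r\}}$ of the structure maps of $D_N^j(P)$ is $1_P$ or $0$, the matrices $\mu^\bullet$ and $\lambda^\bullet$ of Construction \ref{construction 01} become blocks with entries in $\{0,1,-1\}$. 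One then writes down a contracting homotopy degree by degree, organising the computation by the two parities $i=2r$ and $i=2r+1$ and by how $m=Nr$ meets the window $\{j-N+1,\dots,j\}$. I expect this explicit check to be the main obstacle: it is not conceptually deep, but it requires care with the indexing, and the two parities must be handled separately.

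Granting the claim, $\mathbf F(f)$ factors through a contractible complex and is hence null-homotopic, as required; the induced functor $\K_N(\Proj R)\to\K(\Proj \TT_{N-1}(R))$ is then well defined, additive, and agrees with $\mathbf F$ on objects and on representatives of morphisms. For readers preferring to avoid the Frobenius vocabulary, the same result can be obtained directly: from a null-homotopy $s^i\in\Hom_R(X^i,Y^{i-N+1})$ of $f$ one constructs maps $\sigma^i\colon\mathbf F(X^\bullet)^i\to\mathbf F(Y^\bullet)^{i-1}$ out of the maps $s$ and the partial differentials $d_{\{r\}}$, and checks the identity $\mathbf F(f)^i=\partial^{i-1}_{\mathbf F(Y^\bullet)}\sigma^i+\sigma^{i+1}\partial^i_{\mathbf F(X^\bullet)}$. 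This is the same bookkeeping in a different guise, and it is again the two-parity structure of Construction \ref{construction 01} that does the work.
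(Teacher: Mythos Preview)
Your approach is correct and takes a genuinely different route from the paper's. The paper proceeds by a direct computation: given a null-homotopy $(s^m)$ of $f^\bullet$, it writes down explicit upper-triangular matrices $t^i_j$ built from the $s^{m}$ and the iterated differentials $d_{\{k\}}$, with separate formulas for $i$ even and $i$ odd, and asserts that these furnish a homotopy for $\mathbf{F}(f^\bullet)$. This is exactly the ``direct'' alternative you sketch in your last paragraph. Your primary argument---factoring a null-homotopic map through the canonical $\mathcal{S}_N$-injective $I(X^\bullet)=\bigoplus D_N^j(X^i)$ and reducing everything to the single claim that each $\mathbf{F}(D_N^j(P))$ is contractible---is more conceptual and isolates the essential content cleanly. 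It is worth noting that the paper establishes precisely this contractibility later, by a slightly different method: Lemma~\ref{lemma 014} shows that $\mathbf{F}$ sends $N$-acyclic complexes to exact complexes, whence $\mathbf{F}(D_N^j(P))$ is a bounded exact complex of projectives and therefore contractible (this is how Lemma~\ref{lemma 14} argues that $\mathbf{F}$ preserves $\mathcal{S}$-injectives). Your proposal thus amounts to reordering the paper's development, front-loading the disk computation; what you gain is a cleaner proof of the present lemma and a uniform reason why $\mathbf{F}$ descends to homotopy, at the cost of doing the contractibility check now rather than deducing it from the general acyclicity statement.
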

\begin{proof}
We show that if $f^\bullet:(Q^\bullet,e^\bullet) \longrightarrow (P^\bullet,d^\bullet)$ is a null homotopic map in $\C_N(\Proj R)$ then $\mathbf{F}(f^\bullet)$ is a null homotopic map in $\C(\Proj \TT_{N-1}(R))$.
Since $f^\bullet\sim 0^\bullet$, by definition there exists $s^m \in \Hom_{R}(Q^m,P^{m-N+1})$ such that
\[ f^m=\sum_{k=0}^{N-1} d_{ \lbrace N-1-k\rbrace}^{m-(N-1-k)}s^{m+k}e^{m}_{\lbrace k\rbrace}\] 
We want to construct $(t^i)_{i\in \mathbb{Z}}$, such that $(\mathbf{F}(f^\bullet))^i=\lambda^{i-1}_{P^\bullet}t^i+t^{i+1}\mu^i_{Q^\bullet}$(resp. $(\mathbf{F}(f^\bullet))^i=\mu^{i-1}_{P^\bullet}t^i+t^{i+1}\lambda^i_{Q^\bullet}$) when $i$ is even(resp. odd). We consider the following two cases:

\vspace{3mm}
(i)
Let $i=2r$ and $m=Nr$. We define
$t^i=(t^i_j)_{1\leq j \leq N-1}$
where
\begin{tiny}
 $$t_{j}^{i}=
\left[ \begin{array}{ccccc}
\sum_{k=0}^{N-2}d_{\lbrace N-2-k\rbrace}^{m-(N-1-k)}s^{m+k}e_{\lbrace k \rbrace}^{m} & \sum_{k=1}^{N-2}d_{\lbrace N-2-k\rbrace}^{m-(N-1-k)}s^{m+k}e_{\lbrace k-1 \rbrace}^{m+1}  & \cdots  & \sum_{k=j-1}^{N-2}d_{\lbrace N-2-k\rbrace}^{m-(N-1-k)}s^{m+k}e_{\lbrace k-j+1 \rbrace}^{m+j} \\
 & & & \\
 0 & \sum_{k=0}^{N-2}d_{\lbrace N-2-k\rbrace}^{m-(N-2-k)}s^{m+k+1}e_{\lbrace k \rbrace}^{m+1} & \cdots  & \sum_{k=l-2}^{N-2}d_{\lbrace N-2-k\rbrace}^{m-(N-2-k)}s^{m+k+1}e_{\lbrace k-j+2 \rbrace}^{m+j} \\
  0 & 0 &  &  &  \\
    \vdots & \vdots & \ddots  &\vdots  \\
     0 & 0 & \cdots  & \sum_{k=0}^{N-2}d_{\lbrace N-2-k\rbrace}^{m-(N-j-k)}s^{m+k+j-1}e_{\lbrace k \rbrace}^{m+j} \\
\end{array} \right]     
$$
\end{tiny}

\vspace{2mm}
(ii)
Let $i=2r+1$ and $m=Nr$. We define:
$$ t^{i}=(t^{i}_j)_{1\leq j \leq N-1} \qquad \text{where} \qquad t^{i+1}_j=
\left[ \begin{array}{ccccc}

s^{m+N-1} & 0  & \cdots & 0 \\
 0 & s^{m+N} & \cdots  & 0 \\
  0 & 0 &  &  &  \\
    \vdots & \vdots & \ddots  &\vdots  \\
     0 & 0 & \cdots  & s^{m+N+j-2} \\
\end{array} \right]$$
\end{proof}
\begin{lemma}
\label{lemma 13}
The functor $\mathbf F:\K_N(\Proj(R))\longrightarrow \K(\Proj \TT_{N-1}(R))$ is a fully faithful functor.
\end{lemma}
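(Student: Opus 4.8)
The plan is to show the two functors $\Hom_{\K_N(\Proj R)}(Q^\bullet, P^\bullet) \to \Hom_{\K(\Proj\TT_{N-1}(R))}(\mathbf{F}(Q^\bullet), \mathbf{F}(P^\bullet))$ are bijective. For faithfulness, suppose $f^\bullet: Q^\bullet \to P^\bullet$ is a morphism in $\C_N(\Proj R)$ whose image $\mathbf{F}(f^\bullet)$ is null-homotopic in $\C(\Proj\TT_{N-1}(R))$; I must produce a homotopy $s^\bullet$ exhibiting $f^\bullet \sim 0^\bullet$ in $\C_N(\Proj R)$. The idea is that a chain homotopy $(t^i)$ for $\mathbf{F}(f^\bullet)$, being a morphism of $\TT_{N-1}(R)$-representations $\mathbf{F}(Q^\bullet)^i \to \mathbf{F}(P^\bullet)^{i-1}$ (i.e., a lower-triangular matrix of $R$-maps compatible with the structure maps of the representations), has entries from which one can read off the desired $s^m \in \Hom_R(Q^m, P^{m-N+1})$; indeed Lemma \ref{lemma 12} already shows the reverse assignment $s^\bullet \mapsto t^\bullet$, and one checks that, up to replacing $(t^i)$ by a homotopic homotopy if necessary, the top-left entry of $t^i$ (for appropriate $i$) essentially recovers $s^m$. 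Concretely I would extract $s^{m+N-1}$ from the diagonal of $t^{i+1}$ in case (ii) of that lemma's construction and verify, by unwinding the two homotopy identities (the even and odd ones), that these $s$'s satisfy the $N$-complex null-homotopy formula $f^m = \sum_{k=0}^{N-1} d_{\{N-1-k\}}^{m-(N-1-k)} s^{m+k} e^m_{\{k\}}$.

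For fullness, I start with an arbitrary morphism $g^\bullet: \mathbf{F}(Q^\bullet) \to \mathbf{F}(P^\bullet)$ in $\K(\Proj\TT_{N-1}(R))$, represented by an honest chain map $(g^i)$. Each $g^i$ is a morphism of projective $A_{N-1}$-representations, hence (by the explicit description of projectives over $\TT_{N-1}(R)$ recalled in the preliminaries, and the adjunction properties of the $e^j_\lambda$) is determined by a lower-triangular matrix of $R$-homomorphisms between the summands $P^{m}, P^{m+1}, \dots$ appearing in the terms. The chain-map condition $g^{i+1} \mu^i_{Q^\bullet} = \mu^i_{P^\bullet} g^i$ (for $i$ even) and $g^{i+1}\lambda^i_{Q^\bullet} = \lambda^i_{P^\bullet} g^i$ (for $i$ odd) translates, entrywise, into a collection of identities among $R$-maps $P^j \to P^{j'}$. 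The goal is to show that, after subtracting a null-homotopic chain map (i.e., modifying $g^\bullet$ within its homotopy class), one may assume $g^\bullet$ is \emph{diagonal}, i.e., of the form $\mathbf{F}(f^\bullet)$ for some $f^\bullet: Q^\bullet \to P^\bullet$ in $\C_N(\Proj R)$. The off-diagonal entries of the $g^i$ are exactly the degrees of freedom that a homotopy $t^\bullet$ (of the shape constructed in Lemma \ref{lemma 12}) can absorb; I would make this precise by setting up the correspondence between ``off-diagonal data'' and ``homotopy data'' and checking it is surjective, then reading off $f^\bullet$ from the surviving diagonal entries and verifying the diagonal entries are forced to agree (so $f^m$ is well defined) and that they commute with $d^\bullet$ and $e^\bullet$ because the structure maps $\mu, \lambda$ carry the differential of $P^\bullet$ along the diagonal. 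Finally $\mathbf{F}(f^\bullet)$ and $g^\bullet$ have the same image in $\K$, giving fullness.

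The main obstacle, I expect, is the bookkeeping in the fullness argument: showing that every chain map $g^\bullet$ is homotopic to a ``diagonal'' one. This requires identifying, within the block-lower-triangular matrices $g^i$, precisely which entries are genuine (forced to be $\mathbf{F}$ of a map of $N$-complexes) and which are ``gauge'' (killable by a homotopy), and then solving the resulting triangular system of equations for the homotopy $t^\bullet$ degree by degree. The shift between even and odd indices — where the structure maps of the representation $\mathbf{F}(P^\bullet)^i$ alternate between the ``$d_{\{\cdot\}}$'' blocks ($\mu$) and the ``$d, -1$'' blocks ($\lambda$) — means the induction has two interleaved cases, and one must be careful that the homotopy produced in one parity is consistent with the constraints in the other. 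Faithfulness, by contrast, should be essentially a direct unwinding of the formulas in Lemma \ref{lemma 12} run in reverse, so I would present that first and treat fullness as the substantive half of the proof.
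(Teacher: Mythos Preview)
Your proposal is correct and follows essentially the same approach as the paper's proof: for faithfulness the paper extracts the $N$-homotopy $s^m$ from the entries $\alpha^n_{pq}$ of the given ordinary homotopy $(t^n)$ (indeed $s^{m+N-1}=\alpha^{i+1}_{11}$ is exactly a diagonal entry as you predict, though the remaining $s^m$ are slightly more involved combinations of the $\alpha$'s and the differentials), and for fullness the paper does precisely what you outline---it reads off a candidate $f^\bullet$ from the block entries $\beta^i_{pq}$ of an arbitrary chain map $\varphi^\bullet$ and then verifies $\mathbf{F}(f^\bullet)\sim\varphi^\bullet$, which is equivalent to your ``homotope $g^\bullet$ to diagonal form'' formulation. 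Your expectation that the substantive work lies in the fullness bookkeeping, with alternating even/odd cases governed by the $\mu$ and $\lambda$ differentials, matches the paper exactly.
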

\begin{proof}
Let $P^\bullet$ and $Q^\bullet$ be two objects in $\C_N(\Proj R)$. We want to show that:
$$\Hom_{\K_N(\Proj R)}(Q^\bullet,P^\bullet)\cong \Hom_{\K(\Proj \TT_{N-1}(R))}(\mathbf{F}(Q^\bullet),\mathbf{F}(P^\bullet)) $$
Let $f^\bullet:(Q^\bullet,e^\bullet) \longrightarrow (P^\bullet,d^\bullet)$ be a morphism in $\C_N(\Proj R)$ such that $\mathbf{F}(f^\bullet)=0$ in $\K(\Proj \TT_{N-1}(R))$. We want to show that $f^\bullet=0$ in $\K_N(\Proj R)$. We construct $s^m:Q^m\rightarrow P^{m-N+1}$ such that
\begin{equation} 
f^m=\sum_{k=0}^{N-1} d_{ \lbrace N-1-k\rbrace}^{m-(N-1-k)}s^{m+k}e^{m}_{\lbrace k\rbrace} \label{eq-00}
\end{equation}
for all $m\in \Z$.
\\
Suppose $i=Nr$ for some $r\in \Z$. Consider the following diagram:
$$
\xymatrix{Q^{i} \ar[r]^{e^{i}} \ar[d]^{f^{i}} & Q^{i+1} \ar[r]^{e^{i+1}} \ar[d]^{f^{i+1}} & Q^{i+2} \ar[r]^{e^{i+2}} \ar[d]^{f^{i+2}} &  \cdots  \ar[r]  & Q^{i+N-2} \ar[r]^{e^{i+N-2}} \ar[d]^{f^{i+N-2}} & Q^{i+N-1} \ar[d]^{f^{i+N-1}}  \\ P^{i} \ar[r]^{d^{i}} & P^{i+1} \ar[r]^{d^{i+1}}& P^{i+2} \ar[r]^{d^{i+2}} &  \cdots \ar[r] &  P^{i+N-2} \ar[r]^{d^{i+N-2}}  & P^{i+N-1}  } 
$$
Since $\mathbf{F}(f^\bullet)\sim 0^\bullet$ there exists 
 
$$ t^{n}=(t^{n}_j)_{1\leq j \leq N-1} \qquad \text{where} \qquad t^{n}_j=
\left[ \begin{array}{ccccc}

\alpha^{n}_{11} & \alpha^{n}_{12}  & \cdots & \alpha^{n}_{1j} \\
 0 & \alpha^{n}_{22} & \cdots  & \alpha^{n}_{2j} \\
  0 & 0 &  &  &  \\
    \vdots & \vdots & \ddots  &\vdots  \\
     0 & 0 & \cdots  & \alpha^{n}_{jj} \\
\end{array} \right]$$
such that
$$(\mathbf{F}(f^\bullet))^{n}_j=\left\lbrace \begin{array}{lll}
(\lambda^{n-1}_{P^\bullet})_j t^{n}_j+t^{n+1}_j (\mu^{n}_{Q^\bullet})_j & \text{if} \,\, n \,\, \text{is even} \\

\\
(\mu^{n-1}_{P^\bullet})_j t^{n}_j+t^{n+1}_j (\lambda^{n}_{Q^\bullet})_j & \text{if} \,\, n \,\, \text{is odd}
\end{array} \right.$$

\vspace{2mm}
By setting $n$ equal to $i-1,i,i+1$ we have the following equations:
\begin{equation}
f^{i+j-1}=d^{i+j-2}\alpha_{jj}^i+\alpha_{jj}^{i+1}e_{\lbrace N-1 \rbrace}^{i+j-1}= d^{-N+j}_{\lbrace N-1\rbrace}\alpha_{(j+1)(j+1)}^{i-1}+\alpha_{(j+1)(j+1)}^{i}e^{j-1}\label{eq-02}
\end{equation}
for any $1\leq j \leq N-1$,
\begin{equation} 
\alpha_{xy}^i=\sum_{k=x-1}^y d^{i-N+x-1+k}_{\lbrace N-1-k\rbrace}\alpha_{(k+1)(y+1)}^{i-1}+\alpha_{x(y+1)}^{i}e^{y-1} \label{eq-03}
\end{equation}
and
\begin{equation} 
\alpha_{pq}^i=d^{i-p+1}\alpha_{(p-1)q}^{i}+\sum_{k=p-1}^q \alpha_{1k}^{i+1}e^{q-1}_{\lbrace N-1-q+k \rbrace} \label{eq-04}
\end{equation}
for any $1\leq x\leq y \leq N-2$ and $2\leq p \leq q \leq N-1$. 
We define homotopy maps $s^m$, $i\leq m \leq i+N-1$ as
$$s^m=\left\lbrace \begin{array}{lllll}
\alpha^{i+1}_{11} & \text{if} \,\, m=i+N-1,  \\

\\
\alpha^{i}_{1(N-1)} & \text{if} \,\, m=i+N-2, \\

\\
\alpha^{i-1}_{(m-i+2)(m-i+2)}+\sum_{k=1}^{N-2-m+i}d^{m-N} \alpha_{(m-i+1)(k+1)}^{i-1}e^i_{\lbrace k-1\rbrace} & \text{if} \, \, i \leq m \leq i+N-3.
\end{array} \right.$$

%$$s^{i+N-2}=\alpha^{i}_{1(N-1)}, \qquad  s^{i+N-1}=\alpha^{i+1}_{11}$$

%$$s^{i+l}=\alpha^{i-1}_{(l+2)(l+2)}+\sum_{k=1}^{N-2-l}d^{i-N+l} \alpha_{(l+1)(k+1)}^{i-1}e^i_{\lbrace k-1\rbrace} $$
%for any $0\leq l \leq N-3$.

\vspace{1mm}
As $r$ varies in $\Z$, the numbers $i=Nr$ give us a collection of morphisms as above. So we can construct homotopy maps $(s^m)_{m\in \Z}$. If $i=Nr$, then it is easy to show that $f^i,f^{i+1},...,f^{i+N-1}$ and the homotopy maps $(s^m)_{i \leq m \leq i+2N-1}$ satisfy relation (\ref{eq-00}).
\\
Now let $\varphi^\bullet \in \Hom_{\K(\Proj \TT_{N-1}(R))}(\mathbf{F}(Q^\bullet),\mathbf{F}(P^\bullet))$. We want to find $f^\bullet\in \Hom_{\K_N(\Proj R)}(Q^\bullet,P^\bullet)$ such that $\mathbf{F}(f^\bullet)=\varphi^\bullet$. Suppose that $i=Nr$ and $\varphi^i=(\varphi_j^i)_{1\leq j \leq N-1}$ where  
$$
\varphi_j^i=
\left[ \begin{array}{ccccc}

\beta^i_{11} & \beta^i_{12}  & \cdots & \beta^i_{1j} \\
 0 & \beta^i_{22} & \cdots  & \beta^i_{2j} \\
  0 & 0 &  &  &  \\
    \vdots & \vdots & \ddots  &\vdots  \\
     0 & 0 & \cdots  & \beta^i_{jj} \\
\end{array} \right]
$$
We again consider the above diagram. Our goal is to construct $(f^m)_{i\leq m \leq i+N-1}$. 
By assumption we have $\varphi_j^i (\lambda_{Q^\bullet}^{i-1})_j=(\lambda_{P^\bullet}^{i-1})_j\varphi_j^{i-1}$ and $\varphi_j^{i+1} (\mu_{Q^\bullet}^i)_j=(\mu_{P^\bullet}^i)_j \varphi_j^i$ for any $1\leq j \leq N-1$. Hence we have the following equations:
\begin{equation}
 d^{i-1}\beta_{pq}^{i-1}- \beta_{(p+1)q}^{i-1}=- \beta_{p(q-1)}^{i}+\beta_{pq}^{i}e^{i+q-2}, \qquad 1\leq p \leq N-1, \quad p < q \leq N-1 \label{eq-05}
\end{equation}
\begin{equation}
d^{i+p-2} \beta_{pp}^{i-1}= \beta_{pp}^{i} e^{i+p-2}, \qquad 1\leq p \leq N-1 \label{eq-06}
\end{equation}
and
\begin{equation}
 \sum_{k=2}^{N} d_{\lbrace k-1 \rbrace}^{i+N-k}\beta_{(N-k+1)(N-1)}^{i} = \sum_{k=1}^{N-1} \beta_{1k}^{i+1}e_{\lbrace k \rbrace}^{i+N-2}. \label{eq-07}
\end{equation}
We define 
$$f^{i}= \sum_{k=2}^{N-1} \beta_{2k}^{i-1}e_{\lbrace k-2 \rbrace}^{i} + \beta_{1(N-1)}^{i}e_{\lbrace N-2 \rbrace}^{i}. $$
For $i+1 \leq j \leq i+N-2$ define
$$f^{j}= \sum_{k=2+j}^{N-1} \beta_{(2+j)k}^{j-2}e_{\lbrace k-(j+2) \rbrace}^{j-2} + \sum_{k=1}^{j+1} d_{\lbrace j+1-k\rbrace}^{j+k-2}\beta_{k(N-1)}^{j-1}e_{\lbrace N-j-2 \rbrace}^{j}, $$
and
 $$f^{i+N-1}=\sum_{k=1}^{N-1} \beta_{1k}^{i+1}e_{\lbrace k-1 \rbrace}^{i+N-1}.$$
By (\ref{eq-07}) we have $$d^{i+N-1}f^{i+N-1}=f^{i+N}e^{i+N-1}$$ It is not so hard to see that $\mathbf{F}(f^\bullet)\sim \varphi^\bullet. $
\end{proof}
\begin{lemma}
\label{lemma 014}
Let $(P^\bullet,d^\bullet)\in \K^{\ac}_{N}(\Proj R)$. The image of $P^\bullet$ under $\mathbf{F}$ is an exact complex in $\K(\Proj \TT_{N-1}(R))$.
\end{lemma}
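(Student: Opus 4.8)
The plan is to prove that the complex $\mathbf{F}(P^\bullet)$ of $\TT_{N-1}(R)$-modules has vanishing homology, by displaying it as an iterated extension of complexes whose exactness is a literal restatement of the vanishings $\HE^i_r(P^\bullet)=0$. I will use throughout that $\Mod\TT_{N-1}(R)\cong\Rep(A_{N-1},R)$, that the evaluation functors $e^j$ are exact and jointly reflect the zero object (so a complex of $\TT_{N-1}(R)$-modules is exact iff $e^j$ of it is exact at every vertex $j$), and that a short exact sequence of complexes gives a long exact homology sequence, whence an iterated extension of exact complexes is exact.

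First I would record the internal structure of the terms. By Construction \ref{construction 01}, writing $m=Nr$, the representation $\mathbf{F}(P^\bullet)^{2r}$ has value $\bigoplus_{\ell=0}^{j-1}P^{m+\ell}$ at the vertex $j$ of $A_{N-1}$, the representation $\mathbf{F}(P^\bullet)^{2r+1}$ has value $\bigoplus_{\ell=0}^{j-1}P^{m+N-1+\ell}$ there, and in both cases the structure map from vertex $j$ to vertex $j+1$ is the inclusion of the first $j$ summands. Indexing summands by $\ell$, the crucial observation (read off directly from the two matrices in Construction \ref{construction 01}) is that both $\mu^i$ and $\lambda^i$ are \emph{upper triangular in $\ell$}: the block of $\mu^i$ or of $\lambda^i$ that maps the source summand of index $\ell$ into the target summand of index $\ell'$ vanishes whenever $\ell'>\ell$. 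Consequently, for each $0\le p\le N-2$ the subrepresentations $G_p$ of the $\mathbf{F}(P^\bullet)^i$ obtained by retaining only the summands of index $\le p$ are stable under the differentials, hence assemble into a subcomplex $G_p\mathbf{F}(P^\bullet)\subseteq\mathbf{F}(P^\bullet)$, and these form a filtration $0=G_{-1}\subseteq G_0\subseteq\cdots\subseteq G_{N-2}=\mathbf{F}(P^\bullet)$.

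Next I would identify the subquotients. Since on the summand of index $p$ all structure maps are identities, the quotient $G_p\mathbf{F}(P^\bullet)/G_{p-1}\mathbf{F}(P^\bullet)$ is, termwise, $e^{p+1}_{\lambda}$ applied to the complex $D^{(p)}$ of $R$-modules
\[ \cdots\ \lrt\ P^{Nr+p}\ \st{d^{Nr+p}_{\lbrace N-1\rbrace}}{\lrt}\ P^{Nr+N-1+p}\ \st{d^{Nr+N-1+p}}{\lrt}\ P^{N(r+1)+p}\ \lrt\ \cdots \]
read off from the diagonal entries of $\mu^i$ and $\lambda^i$; the two consecutive composites here equal $d_{\lbrace N\rbrace}=0$, so this is a genuine complex. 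Its exactness at $P^{Nr+N-1+p}$ amounts to $\Ker d^{Nr+N-1+p}=\im d^{Nr+p}_{\lbrace N-1\rbrace}$, that is $\ZE^{Nr+N-1+p}_1(P^\bullet)=\BE^{Nr+N-1+p}_{N-1}(P^\bullet)$, i.e. $\HE^{Nr+N-1+p}_1(P^\bullet)=0$; its exactness at $P^{Nr+p}$ amounts to $\ZE^{Nr+p}_{N-1}(P^\bullet)=\BE^{Nr+p}_1(P^\bullet)$, i.e. $\HE^{Nr+p}_{N-1}(P^\bullet)=0$. Both hold because $P^\bullet\in\K^{\ac}_N(\Proj R)$ (cf. also Remark \ref{remark 01}). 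Since $e^{p+1}_{\lambda}$ is an exact functor, each $G_p\mathbf{F}(P^\bullet)/G_{p-1}\mathbf{F}(P^\bullet)$ is an exact complex of $\TT_{N-1}(R)$-modules.

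Finally I would run induction on $p$: from the short exact sequences of complexes $0\to G_{p-1}\mathbf{F}(P^\bullet)\to G_p\mathbf{F}(P^\bullet)\to G_p\mathbf{F}(P^\bullet)/G_{p-1}\mathbf{F}(P^\bullet)\to 0$ and the long exact homology sequence, each $G_p\mathbf{F}(P^\bullet)$ is exact, hence so is $\mathbf{F}(P^\bullet)=G_{N-2}\mathbf{F}(P^\bullet)$. The only genuinely computational point, and the one I expect to require care, is the second paragraph: checking directly from the matrices in Construction \ref{construction 01} that $\mu^i$ and $\lambda^i$ are upper triangular in the summand index and that their diagonal parts are precisely the differentials of the $D^{(p)}$. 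Once that bookkeeping is in place, the translation into vanishing of $\HE$ and the extension argument are purely formal.
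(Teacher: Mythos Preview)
Your argument is correct and is a genuinely different route from the paper's proof. The paper proceeds by a direct element chase at each vertex $j$: given $(x_1,\dots,x_j)\in\Ker\lambda^{i+1}_j$ (respectively $\Ker\mu^{i+2}_j$), it explicitly produces a preimage $(y_1,\dots,y_j)$ under $\mu^i_j$ (respectively $\lambda^{i+1}_j$), using the $N$-acyclicity of $P^\bullet$ to solve the resulting equations. Your approach instead exploits the upper-triangularity of the matrices $\mu^i_j$ and $\lambda^i_j$ to filter $\mathbf{F}(P^\bullet)$ by the summand index, identifies each graded piece as $e^{p+1}_\lambda$ applied to a two-periodic complex of $R$-modules whose homology is literally $\HE^\ast_1(P^\bullet)$ and $\HE^\ast_{N-1}(P^\bullet)$, and then concludes via the long exact sequence. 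What this buys you is a cleaner, coordinate-free explanation of \emph{why} the lemma holds: the exactness of $\mathbf{F}(P^\bullet)$ is built out of exactly the vanishings $\HE^i_1=0$ and $\HE^i_{N-1}=0$, with no further computation needed. The paper's approach, by contrast, is more self-contained and requires no auxiliary filtration machinery, at the cost of being less transparent about which hypotheses are really being used. One minor remark: your opening sentence mentions that the $e^j$ jointly reflect exactness, but you never actually invoke this---your argument runs entirely through the exactness of $e^{p+1}_\lambda$ and the long exact sequence, so that clause could be dropped.
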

\begin{proof}
Suppose $i=2r$ and $m=Nr$. The following diagram shows the image of $P^\bullet$ under $\mathbf{F}$ in degree $i$, $i+1$, $i+2$ and $i+3$. 
$$
\begin{small}
\xymatrix{ 
 P^m \ar[r] \ar[d]^{\mu_{1}^{i}} & P^m\oplus P^{m+1}\ar[r] \ar[d]^{\mu_{2}^{i}}  & \cdots \ar[r]  &  P^m\oplus \cdots \oplus P^{m+N-2} \ar[d]^{\mu_{N-1}^{i}}  \\
P^{m+N-1} \ar[r] \ar[d]^{\lambda_{1}^{i+1}} & P^{m+N-1}\oplus P^{m+N}\ar[r] \ar[d]^{\lambda_{2}^{i+1}} & \cdots \ar[r] &  P^{m+N-1}\oplus \cdots \oplus P^{m+2N-3} \ar[d]^{\lambda_{N-1}^{i+1}} \\
P^{m+N} \ar[r] \ar[d]^{\mu_{1}^{i+2}} & P^{m+N}\oplus P^{m+N+1}\ar[r]\ar[d]^{\mu_{2}^{i+2}} & \cdots \ar[r] &  P^{m+N}\oplus \cdots \oplus P^{m+2N-2}\ar[d]^{\mu_{N-1}^{i+2}}
\\
P^{m+2N-1} \ar[r]  & P^{m+2N-1}\oplus P^{m+2N}\ar[r] & \cdots \ar[r] &  P^{m+2N-1}\oplus \cdots \oplus P^{m+3N-3}
 }
 \end{small}
$$
We want to show that $\im \mu_{j}^{i}=\Ker \lambda_{j}^{i+1}$ for any $1 \leq j \leq N-1$. Clearly $\im \mu_{j}^{i}\subseteq\Ker \lambda_{j}^{i+1}$. Let $(x_1,x_2,...,x_j)\in \Ker \lambda_{j}^{i+1}$. It is easy to show that there exists $y_t\in P^{m+t-1}$ for all $1 \leq t \leq j$ such that 
$$ x_p= \sum_{k=1}^{j-p+1} d^{i+p-1+(k-1)}_{\lbrace N-k\rbrace}(y_{p+k-1}),$$ for all $1 \leq p \leq t$. Hence $(x_1,x_2,...,x_j)=\mu_{j}^{i}(y_1,y_2,...,y_j)$.
\\
Likewise suppose that $i=2r+1$ and $m=Nr$. We show that $\im \lambda_{j}^{i+1}=\Ker \mu_{j}^{i+2}$ for any $1 \leq j \leq N-1$. Clearly $\im \lambda_{j}^{i+1}\subseteq\Ker \mu_{j}^{i+2}$. Let $(x_1,x_2,...,x_j)\in \Ker \mu_{j}^{i+2}$. Tt is easy to show that there exists $y_{t}\in P^{m+N-2+t}$ for all $1 \leq t \leq j$ such that 
$$ x_{j}=d^{m+2N-3}(y_{j}),$$
and
$$x_{q}=-y_{q+1}+d^{m+N+q-2}(y_{q})$$ for any $1 \leq q \leq t$ and $q\neq j$. 
Hence $(x_1,x_2,...,x_j)=\lambda_{j}^{i+1}(y_1,y_2,...,y_j)$.
\end{proof}
\begin{lemma}
\label{lemma 14}
The functor $\mathbf F:\K_N(\Proj(R))\longrightarrow \K(\Proj \TT_{N-1}(R))$ is an exact triangulated functor.
\end{lemma}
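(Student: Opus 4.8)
The plan is to apply the Lemma of Happel recalled above to the additive functor $\mathbf{F}$ of Construction \ref{construction 01}. View $\C_N(\Proj R)$ and $\C(\Proj \TT_{N-1}(R))$ as Frobenius categories for the exact structures given by the termwise split short exact sequences: for $\C_N(R)$ this is recalled above from \cite{IKM14}, and $\C_N(\Proj R)$ inherits a Frobenius structure with projective$=$injective objects the discs $D^j_N(P)$, $P$ projective; likewise $\C(\Proj \TT_{N-1}(R))$ is Frobenius with projective$=$injective objects the contractible complexes of projectives. Their stable categories are $\K_N(\Proj R)$ and $\K(\Proj \TT_{N-1}(R))$, and by Lemma \ref{lemma 12} the functor $\mathbf{F}$ descends to a functor between them. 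So it remains to check the hypotheses of Happel's Lemma: that $\mathbf{F}$ is exact, that it carries injectives to injectives, and that there is an invertible natural transformation $\mathbf{F}T\to T'\mathbf{F}$ between the translation functors.

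First I would verify exactness. Reading off Construction \ref{construction 01}, in each cohomological degree $i$ the object $\mathbf{F}(P^\bullet)^i$ is, naturally in $P^\bullet$, a finite direct sum of representations of the form $e^t_\lambda(P^n)$, with $t$ and $n$ read off from the cases $i=2r$, $i=2r+1$, and $\mathbf{F}$ acts on a morphism $f^\bullet$ in degree $i$ by the matching direct sum of the $e^t_\lambda(f^n)$. Now if $0\to X^\bullet\to Y^\bullet\to Z^\bullet\to 0$ is a short exact sequence in $\C_N(R)$ that is termwise split in $\Mod R$, then evaluating at any single degree yields a split short exact sequence of $R$-modules; the functors $e^t_\lambda$ are additive, hence preserve split short exact sequences, and a finite direct sum of split short exact sequences is split. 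Therefore $0\to \mathbf{F}(X^\bullet)\to \mathbf{F}(Y^\bullet)\to \mathbf{F}(Z^\bullet)\to 0$ is termwise split exact, i.e.\ lies in the exact structure of $\C(\Proj \TT_{N-1}(R))$, which is precisely the exactness of $\mathbf{F}$.

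Next, $\mathbf{F}$ preserves injectives. In any Frobenius category an object is injective exactly when it is isomorphic to $0$ in the stable category; thus any $\mathcal{S}_N(R)$-injective object of $\C_N(\Proj R)$ is zero in $\K_N(\Proj R)$, and by Lemma \ref{lemma 12} its image under $\mathbf{F}$ is zero in $\K(\Proj \TT_{N-1}(R))$, i.e.\ is injective there. (Alternatively, a direct computation shows that $\mathbf{F}$ sends each disc $D^j_N(P)$, $P$ projective, to a finite direct sum of ordinary contractible discs of projective $\TT_{N-1}(R)$-modules.) For the natural transformation: the translation $T$ on the stable category of a Frobenius category is computed by choosing for each object $X$ a short exact sequence $0\to X\to I\to \Omega^{-1}X\to 0$ in the exact structure with $I$ injective and putting $TX=\Omega^{-1}X$. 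Since $\mathbf{F}$ is exact and injective-preserving, applying it to such a sequence for $X^\bullet$ gives a short exact sequence $0\to \mathbf{F}(X^\bullet)\to \mathbf{F}(I)\to \mathbf{F}(\Omega^{-1}X^\bullet)\to 0$ in the exact structure with $\mathbf{F}(I)$ injective; comparing with the one chosen for $\mathbf{F}(X^\bullet)$, the standard comparison of cokernels of admissible short exact sequences with a fixed kernel and injective middle term furnishes a canonical isomorphism $\mathbf{F}(\Omega^{-1}X^\bullet)\cong \Omega^{-1}\mathbf{F}(X^\bullet)$ in $\K(\Proj \TT_{N-1}(R))$, natural in $X^\bullet$. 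This is the desired $\alpha\colon \mathbf{F}T\to T'\mathbf{F}$ (the translations $T$, $T'$ being the suspension functors of the excerpt up to natural isomorphism, since all of these present the triangulated structures of the respective stable categories). Happel's Lemma then gives that $\mathbf{F}$ is an exact functor of triangulated categories.

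The only step with real content is the exactness check: one must see that the termwise $R$-module splittings genuinely survive, after applying $\mathbf{F}$, as splittings of $\TT_{N-1}(R)$-modules — equivalently, of representations of the quiver $A_{N-1}$ — and this is exactly where the explicit termwise description of $\mathbf{F}$ from Construction \ref{construction 01}, together with additivity of the $e^t_\lambda$, is used. Preservation of injectives is then immediate from Lemma \ref{lemma 12}, and the natural isomorphism $\alpha$ is a formal consequence of exactness plus preservation of injectives. (One could instead bypass Happel's Lemma and check directly that $\mathbf{F}$ intertwines the suspension functors of the excerpt up to a natural homotopy equivalence and takes the standard mapping-cone triangles to mapping-cone triangles, but the Frobenius-category route is the shortest.)
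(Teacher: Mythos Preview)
Your argument is correct, and it is a genuinely different route from the paper's. The paper verifies the hypotheses of Happel's Lemma as you do, but with different tools at two of the three steps. For preservation of injectives, the paper does not use Lemma~\ref{lemma 12}; instead it invokes Lemma~\ref{lemma 014} to see that $\mathbf{F}(D^j_N(P))$ is a bounded exact complex of projective $\TT_{N-1}(R)$-modules, hence contractible. Your route via Lemma~\ref{lemma 12} (zero in the stable category goes to zero in the stable category, and in a Frobenius category zero-in-stable is equivalent to injective) is shorter and avoids any acyclicity computation. The real divergence is in the natural isomorphism $\mathbf{F}\Sigma\cong [1]\,\mathbf{F}$: the paper writes down explicit matrices $\alpha^i_j$, $\beta^i_j$ and homotopy maps $s^i_j$ exhibiting the isomorphism and its inverse degree by degree, whereas you extract it for free from exactness plus injective-preservation by the standard ``compare two injective resolutions of the same object'' argument in a Frobenius category. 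Your approach is cleaner and is in fact how Happel's Lemma is usually packaged (the natural transformation is a consequence, not an extra hypothesis); the paper's explicit matrices buy a concrete formula for the comparison isomorphism, which could in principle be reused in later computations but is not needed for the statement itself. Your description of $\mathbf{F}(P^\bullet)^i$ as $\bigoplus_t e^t_\lambda(P^{n(t)})$ with $\mathbf{F}(f^\bullet)^i=\bigoplus_t e^t_\lambda(f^{n(t)})$ is exactly right from Construction~\ref{construction 01}, and that makes the exactness check transparent.
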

\begin{proof}
Clearly $\mathbf{F}$ is an exact functor. Since $\mathbf{F}$ preserves direct sum it is enough to show that $\mathbf{F}$ transforms $D^{j}_{N}(P) $ to a projective object of $\C(\Proj \TT_{N-1}(R))$. By lemma \ref{lemma 014} $\mathbf{F}(D^{j}_{N}(P))$ is a bounded exact complex in $\C(\Proj \TT_{N-1}(R))$, Since $D^{j}_{N}(P) $ is an $N$-exact complex.  
Hence $\mathbf{F}(D^{j}_{N}(P))$ is a projective object in $\C(\Proj \TT_{N-1}(R))$.
Now we show that $$\mathbf{F}(\Sigma P^\bullet)\cong \mathbf{F}(P^\bullet)[1]$$
Let $P^\bullet$ be a complex in $\mathbb{K}_N(\Proj(R))$. For $i \equiv 0 (\mod 2)$ let $m= \frac{iN}{2}$. For $1 \leq j \leq N-1$ and $1 \leq k \leq j$ let $\alpha^i_{j,k} : (\Sigma P^\bullet)^{m+k-1} \to \oplus_{l=m+N-1}^{m+N-2+j} P^l$ be a morphism given as
$$
\alpha^i_{j,k} = \bordermatrix{
~ & ~ & ~ & ~ & \cr
1 & d^{m+k}_{\{N-k-1\}} & d^{m+k+1}_{\{N-k-2\}} & \cdots & d^{m+k+N-2}_{\{-k+1\}} \cr
2 & d^{m+k}_{\{N-k\}} & d^{m+k+1}_{\{N-k-1\}} & \cdots & d^{m+k+N-2}_{\{-k+2\}} \cr
\vdots & \vdots & \vdots & & \vdots \cr
k & d^{m+k}_{\{N-2\}} & d^{m+k+1}_{\{N-3\}} & \cdots & d^{m+k+N-2}_{\{0\}} \cr
k+1 & 0 & 0 & \cdots & 0 \cr
\vdots & \vdots & \vdots & & \vdots \cr
j & 0 & 0 & \cdots & 0 \cr
}
$$
These morphisms define a map $\alpha^i_j : F(\Sigma P^\bullet)^i_j \to (\mathbf{F}(P^\bullet)[1])^i_j$ as
$$
\alpha^i_j = \begin{bmatrix}
\alpha^i_{j,1} & \alpha^i_{j,2} & \cdots & \alpha^i_{j,j}
\end{bmatrix}.
$$
For $i \equiv 1 (\mod 2)$ let $m=\frac{(i+1)N}{2}$. For $1 \leq j \leq N-1$ and $1 \leq k \leq j$ let $\alpha^i_{j,k} : (\Sigma P^\bullet)^{m+k-1} \to \oplus_{l=m}^{m+j-1} P^l$ be a morphism given as
$$
\alpha^i_{j,k} = \bordermatrix{
~ & ~ & ~ & ~ & ~\cr
1 & 0 & 0 & \cdots & 0 \cr
\vdots & \vdots & \vdots & & 0 \cr
k-1 & 0 & 0 & \cdots & 0 \cr
k & 1 & 0 & \cdots & 0 \cr
k+1 & 0 & 0 & \cdots & 0 \cr
\vdots & \vdots & \vdots & & 0 \cr
j & 0 & 0 & \cdots & 0 \cr
}
$$
Likewise these morphisms give a map $\alpha^i_j : F(\Sigma P^\bullet)^i_j \to (\mathbf{F}(P^\bullet)[1])^i_j$ defined by
$$\alpha^i_j = \begin{bmatrix}
\alpha^i_{j,1} & \alpha^i_{j,2} & \cdots & \alpha^i_{j,j}
\end{bmatrix}
$$
In the other direction for $i \equiv 0(\mod 2)$, define $\beta^i_j :(\mathbf{F}(P^\bullet)[1])^i_j  \to F(\Sigma P^\bullet)^i_j$ as
$$\beta^i_j = \begin{bmatrix}
\beta^i_{j,1}\\
\beta^i_{j,2}\\
\vdots\\
\beta^i_{j,j}\\
\end{bmatrix}
$$
where,
$$
\beta^i_{j,k} = \bordermatrix{
~ & 1 & \cdots & k-1 & k & k+1 & \cdots & j \cr
~ & 0 & \cdots & 0 & 0 & 0 & \cdots & 0 \cr
~ & \vdots & & \vdots & \vdots & \vdots & & \vdots \cr
~ & \vdots & & \vdots & 0 & \vdots & & \vdots \cr
~ & 0 & \cdots & 0 & 1 & 0 & \cdots & 0
}
$$
For $i \equiv 1(\mod 2)$ define $\beta^i_j : (\mathbf{F}(P^\bullet)[1])^i_j  \to F(\Sigma P^\bullet)^i_j$ as
$$\beta^i_j = \begin{bmatrix}
\beta^i_{j,1}\\
\beta^i_{j,2}\\
\vdots\\
\beta^i_{j,j}\\
\end{bmatrix}
$$
where,
$$
\beta^i_{j,k} = \left[
\begin{array}{cc}
0_{j-k+1\times k-1} & I_{j-k+1}\\
0_{k-1\times k-1} & 0_{k-1 \times j-k+1}
\end{array}
\right],
$$
in which, $I_{j-k+1}$ is the identity matrix of order $j-k+1$, and the other three entries are zero matrices of given size.

It is not so hard to see that the composition $\alpha^i \circ \beta^i : (\mathbf{F}(P^\bullet)[1])^i \to (\mathbf{F}(P^\bullet)[1])^i$ is the identity morphism.
%It is quite tedious to show that the map $\beta \circ \alpha : F(\Sigma P^\bullet) \to  F(\Sigma P^\bullet)$ is homotopic to the identity map.
One can show that $\beta \circ \alpha - 1$ is null-homotopic where the homotopy maps are defined as follows:
For $i \equiv 0(\mod 2)$, $1 \leq j \leq N-1$ and $1 \leq k \leq j$ let
$$
\psi^i_{k} = \left[
\begin{array}{cc}
0_{k\times N-k-1} & 0_{k \times k}\\
-I_{N-k-1} & 0_{N-k-1 \times k}
\end{array}
\right]
$$
and define
$$
s^i_j = \left[
\begin{array}{cccc}
\psi^i_1 & \psi^i_2 & \cdots & \psi^i_j\\
0        & \psi^i_1 & \ddots & \psi^i_{j-1}\\
\vdots   & \ddots   &        & \vdots\\
0        & \cdots   & 0      & \psi^i_1 
\end{array}
\right]
$$
For  $i \equiv 1(\mod 2)$ and $1 \leq j \leq N-1$ define $s^i_j=0$.
It is quite tedious to show that the morphisms $\alpha:F(\Sigma P^\bullet) \to F(P^\bullet)[1]$ and $\beta:F(P^\bullet)[1] \to F(\Sigma P^\bullet)$ are natural in $P^\bullet$.
\end{proof}
Putting it all together, we have 
\begin{proposition}
\label{proposition 10}
The functor $\mathbf{F}:\K_N(\Proj R)\longrightarrow \K(\Proj \TT_{N-1}(R))$ is a fully faithful triangle functor. 
\end{proposition}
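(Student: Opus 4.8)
The plan is to read the proposition off from the lemmas already assembled in this section, since the statement is precisely the conjunction ``fully faithful (Lemma \ref{lemma 13}) $+$ exact triangle functor (Lemma \ref{lemma 14})''. First I would record that, by Lemma \ref{lemma 12}, the functor $\mathbf{F}$ of Construction \ref{construction 01} is well defined as a functor $\K_N(\Proj R)\to\K(\Proj\TT_{N-1}(R))$: a null-homotopy $(s^m)$ of a morphism $f^\bullet$ of $N$-complexes is transported, via the explicit block formulas given there, to a null-homotopy $(t^i)$ of $\mathbf{F}(f^\bullet)$, so $\mathbf{F}$ descends to the homotopy categories.

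Next I would invoke Lemma \ref{lemma 13} for full faithfulness. Faithfulness amounts to the observation that, if $\mathbf{F}(f^\bullet)$ is null-homotopic via some $(t^n)$, then the diagonal blocks $\alpha^n_{jj}$ of the $t^n$, suitably combined with their off-diagonal entries, assemble into maps $s^m\colon Q^m\to P^{m-N+1}$ satisfying the $N$-complex homotopy identity (\ref{eq-00}); this is verified window by window over the ranges $[i,i+N-1]$ with $i=Nr$. Fullness amounts to the fact that an arbitrary $\varphi^\bullet\colon\mathbf{F}(Q^\bullet)\to\mathbf{F}(P^\bullet)$ can be corrected by a homotopy so that its block entries $\beta^i_{pq}$ come from a genuine morphism $f^\bullet$ of $N$-complexes, the $f^m$ being the explicit alternating sums of the $\beta$'s, with $\mathbf{F}(f^\bullet)\sim\varphi^\bullet$. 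For the triangulated structure I would invoke Lemma \ref{lemma 14}, which itself rests on the Happel lemma recalled above: $(\C_N(\Proj R),\mathcal{S}_N(R))$ and $(\C(\Proj\TT_{N-1}(R)),\mathcal{S})$ are Frobenius categories whose stable categories are $\K_N(\Proj R)$ and $\K(\Proj\TT_{N-1}(R))$, the functor $\mathbf{F}$ is exact for the split exact structures and carries the $\mathcal{S}_N$-projective-injective objects $D^j_N(P)$ to bounded exact complexes of projective $\TT_{N-1}(R)$-modules (Lemma \ref{lemma 014}), hence to projective-injective objects, and the mutually homotopy-inverse natural maps $\alpha\colon\mathbf{F}(\Sigma P^\bullet)\to\mathbf{F}(P^\bullet)[1]$ and $\beta$ supply the required invertible natural transformation $\mathbf{F}\Sigma\cong[1]\mathbf{F}$. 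Combining full faithfulness with exactness as a triangle functor gives the proposition.

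There is no genuine obstacle remaining at this stage: everything needed is in place. I would only flag that the real difficulty of the construction is concentrated in the two lemmas just cited — namely extracting an $N$-complex homotopy from a homotopy of triangular-matrix complexes in Lemma \ref{lemma 13}, and checking that $\alpha$ and $\beta$ are natural in $P^\bullet$ in Lemma \ref{lemma 14} — both of which are ultimately bookkeeping with the block matrices of Construction \ref{construction 01}.
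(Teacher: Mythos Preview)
Your proposal is correct and matches the paper's approach exactly: the proposition is stated immediately after Lemmas \ref{lemma 12}, \ref{lemma 13}, \ref{lemma 014}, and \ref{lemma 14} with the one-line justification ``Putting it all together, we have'', so it is precisely the conjunction of those lemmas as you describe. Your elaboration of how each lemma contributes is accurate and more detailed than what the paper itself provides.
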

\begin{definition}
We call an $N$-complex $P^\bullet$, K-projective, if $P^\bullet \in \C_N(\Proj R)$ and $$\Hom_{\K_N(R)}(P^\bullet,Y^\bullet)=0$$ for all $Y^\bullet\in\K_{N}^{\ac}(R)$. We denote by $\K_{N}^{Proj}(\Proj R)$ the category of all K-projective $N$-complexes. Dually we define the triangulated full subcategory $\K_{N}^{Inj}(\Inj R)$ consisting of complexes $I^\bullet$ of injectives such that $\Hom_{\K_N(R)}(X^\bullet,I^\bullet)=0$ for all $X^\bullet\in\K_{N}^{\ac}(R)$
\end{definition}

\begin{theorem}
\label{theorem 10}
We have the following triangle equivalences:
$$ \K_{N}^{Proj}(\Proj R)\cong \D_{N}(R) \qquad , \qquad \K_{N}^{Inj}(\Inj R)\cong \D_{N}(R)  $$

\end{theorem}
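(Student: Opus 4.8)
The functor realizing the first equivalence should be the composite of the inclusion $\K_{N}^{Proj}(\Proj R)\hookrightarrow\K_N(R)$ with the localization functor $Q\colon\K_N(R)\to\D_N(R)=\K_N(R)/\K_{N}^{\ac}(R)$, and the injective statement will then follow by a dual argument. Since $Q$ and the inclusion are triangle functors, so is the composite, and it remains to check full faithfulness and density. Full faithfulness is the soft part: it is the general principle that if $\CN\subseteq\CT$ is a thick subcategory and $\CP\subseteq\CT$ is full with $\Hom_{\CT}(P,Y)=0$ for all $P\in\CP$, $Y\in\CN$, then $\CP\to\CT/\CN$ is fully faithful. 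Concretely, one represents a morphism $P^\bullet\to Y^\bullet$ in $\D_N(R)$ by a fraction $P^\bullet\xrightarrow{g}Z^\bullet\xleftarrow{t}Y^\bullet$ with $C(t)\in\K_{N}^{\ac}(R)$; completing $t$ to a triangle and applying $\Hom_{\K_N(R)}(P^\bullet,-)$, the terms $\Hom_{\K_N(R)}(P^\bullet,C(t))$ and $\Hom_{\K_N(R)}(P^\bullet,C(t)[-1])$ vanish because $P^\bullet$ is K-projective and $C(t)$ and its shift are $N$-exact, so $t_\ast$ is an isomorphism and the fraction already comes from a map in $\K_N(R)$; the same triangle shows a map into $Y^\bullet$ that dies in $\D_N(R)$ is null-homotopic.

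Density is the heart of the matter: with the identification above it says precisely that every $N$-complex $X^\bullet$ admits a quasi-isomorphism $P^\bullet\to X^\bullet$ with $P^\bullet\in\K_{N}^{Proj}(\Proj R)$ — a K-projective resolution — which is the $N$-complex analogue of Spaltenstein's theorem. I would first treat bounded-above $N$-complexes: given such an $X^\bullet$, construct degree by degree, working downwards from the top nonzero degree and using that $\Mod R$ has enough projectives, a bounded-above complex $P^\bullet$ of projectives with a morphism $P^\bullet\to X^\bullet$; by Remark \ref{remark 01} it suffices to arrange that the mapping cone has vanishing $\HE^i_1$ for all $i$, which the construction can guarantee. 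A bounded-above $N$-complex of projectives is automatically K-projective (one kills morphisms into an $N$-exact complex by a downward induction on degree, as in the classical case), so this settles the bounded-above case.

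For an arbitrary $X^\bullet$, write $X^\bullet=\varinjlim_{n}X^\bullet_{\le n}$ as the filtered colimit of its brutal truncations, where $X^\bullet_{\le n}$ agrees with $X^\bullet$ in degrees $\le n$ and is $0$ above and the transition maps are degreewise split monomorphisms. Choosing the bounded-above resolutions of the previous step compatibly produces $P^\bullet_n\in\K_{N}^{Proj}(\Proj R)$ with degreewise split monos $P^\bullet_n\to P^\bullet_{n+1}$ and quasi-isomorphisms $P^\bullet_n\to X^\bullet_{\le n}$; then set $P^\bullet=\operatorname{hocolim}_n P^\bullet_n$, defined by the telescope triangle $\coprod_n P^\bullet_n\to\coprod_n P^\bullet_n\to P^\bullet\to$ in $\K_N(R)$. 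Now $P^\bullet$ has projective components since a countable coproduct of projectives is projective; it lies in $\K_{N}^{Proj}(\Proj R)$ since the K-projective $N$-complexes form a triangulated subcategory closed under coproducts; and the natural map $P^\bullet\to\operatorname{hocolim}_n X^\bullet_{\le n}\to X^\bullet$ is a quasi-isomorphism, because each $\HE^i_r$ commutes with countable coproducts and with the colimit along the degreewise split transition maps. Hence $Q(P^\bullet)\cong X^\bullet$ in $\D_N(R)$, which gives density and therefore $\K_{N}^{Proj}(\Proj R)\cong\D_N(R)$.

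The injective equivalence is obtained by dualizing throughout: one uses the cobrutal truncations $X^\bullet_{\ge -n}$, an inverse system of bounded-below $N$-complexes along degreewise split epimorphisms with $X^\bullet=\varprojlim_n X^\bullet_{\ge -n}$; bounded-below $N$-complexes of injectives are K-injective; and one assembles a homotopy limit of bounded-below injective coresolutions, using that $\Mod R$ has exact products, so the $\HE^i_r$ commute with the relevant inverse limits, and that a countable product of injectives is injective. I expect the only genuine difficulty to be the unbounded case of density — arranging the resolutions of the truncations to be compatible with degreewise split transition maps so that the telescope simultaneously has projective components and maps quasi-isomorphically to $X^\bullet$ — and here Remark \ref{remark 01}, which reduces $N$-exactness to the single homology functor $\HE^\bullet_1$, is what keeps the bookkeeping manageable.
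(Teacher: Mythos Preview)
The paper does not actually prove this theorem: its proof consists of the single line ``See \cite[Theorem 2.22]{IKM14}.'' Your proposal therefore cannot be compared to an argument in the paper itself; what you have supplied is a self-contained proof sketch of the cited result, namely the $N$-complex version of Spaltenstein's theorem on the existence of K-projective (dually, K-injective) resolutions for unbounded complexes, assembled via homotopy colimits of resolutions of brutal truncations. This is the natural route, and almost certainly the one taken in \cite{IKM14} as well.

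Your outline is correct. Two places would benefit from a little more detail. First, the assertion that a bounded-above $N$-complex of projectives is automatically K-projective: the downward induction that works for $N=2$ goes through, but for general $N$ the homotopy $s^i\colon P^i\to Y^{i-N+1}$ must satisfy an $(N-1)$-term relation, so the inductive step uses $N$-exactness of $Y^\bullet$ at several cycle modules $\ZE^i_r(Y^\bullet)$ simultaneously rather than just one. Second, and more substantially, the phrase ``choosing the bounded-above resolutions of the previous step compatibly'' hides the real content of the Spaltenstein argument. One clean way to do this is to observe that the cokernel of $X^\bullet_{\le n}\hookrightarrow X^\bullet_{\le n+1}$ is the stalk $N$-complex $D^{n+1}_1(X^{n+1})$, resolve that, and build $P^\bullet_{n+1}$ as an $\mathcal{S}_N$-extension of this resolution by $P^\bullet_n$; the Frobenius exact structure $(\C_N(R),\mathcal{S}_N(R))$ recalled in Section~\ref{preliminaries} makes the lifting routine and guarantees the transition maps $P^\bullet_n\to P^\bullet_{n+1}$ are degreewise split monomorphisms, which is exactly what you need for the telescope to compute the correct homology. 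With these two points filled in, the argument is complete.
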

\begin{proof}
See \cite[Theorem 2.22.]{IKM14}.
\end{proof}
\begin{definition}
For an additive category $\mathcal{A}$ with arbitrary coproducts, an object $C$ is called compact in $\mathcal{A}$ if the canonical morphism $\coprod_{i}\Hom_{\mathcal{A}}(C,X_i)\rightarrow \Hom_{\mathcal{A}}(C,\coprod_i X_i)$ is an isomorphism for any coproduct $\coprod_i X_i$ in $\mathcal{A}$. We denote by $\mathcal{A}^c$ the subcategory of $\mathcal{A}$ consisting of all compact objects.
\end{definition}
\begin{definition}
Let $\mathcal{T}$ be a triangulated category. A non-empty subcategory $\mathcal{S}$ of $\mathcal{T}$ is said to be thick if it is a triangulated subcategory of $\mathcal{T}$ that is closed under retracts. If, in addition, $\mathcal{S}$ is closed under all coproducts allowed in $\mathcal{T}$, then it is localizing; if it is closed under all products in $\mathcal{T}$ it is colocalizing. 
\end{definition}
%As a consequence of the above definition, we have the following remark. For the proof see \cite{Kra06}.
The following remark gives us a better understanding of the objects in a thick subcategory, see \cite{Kra06}.
\begin{remark}
Let $\mathcal{S}$ be a class of objects of a triangulated category $\mathcal{T}$. Then 
\begin{itemize}
\item $\Thick (\mathcal{S})=\bigcup_{n\in \mathbb{N}}\langle \mathcal{S}\rangle_n$, where
\begin{itemize}

\item[$ -$ ]$\langle \mathcal{S}\rangle_1$ is the full subcategory of $\mathcal{T}$ containing $\mathcal{S}$ and closed under finite direct sums, direct summands and shifts.
\item[$ -$ ] For $n>1$, $\langle \mathcal{S}\rangle_n$ is the full subcategory of $\mathcal{T}$ consisting of all objects $S$ such that there is a distinguished triangle $Y\rightarrow X \rightarrow Z \rightsquigarrow$ in $\mathcal{T}$ with $Y\in \langle \mathcal{S}\rangle_i$, and $Z\in \langle \mathcal{S}\rangle_j$ such that $i,j<n$ and $S$ is a direct summand of shifting of $X$.
\end{itemize} 
\end{itemize}
\end{remark}
The following theorem has been proved by Iyama and et. al. in \cite{IKM14}. As a result of proposition \ref{proposition 10}, we present another proof for this theorem.
\begin{theorem}
\label{theorem 11}
For a ring $R$, we have the following triangle equivalence:
$$ \D_{N}(R)\cong \D(\TT_{N-1}(R))$$
\end{theorem}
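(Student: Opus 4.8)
The plan is to derive the equivalence from Proposition~\ref{proposition 10} and Theorem~\ref{theorem 10}. Fix a quasi-inverse of the equivalence $\K_N^{Proj}(\Proj R)\cong\D_N(R)$ of Theorem~\ref{theorem 10} and form the composite
$$
G\colon\ \D_N(R)\ \xrightarrow{\ \sim\ }\ \K_N^{Proj}(\Proj R)\ \xrightarrow{\ \mathbf F\ }\ \K(\Proj \TT_{N-1}(R))\ \xrightarrow{\ q\ }\ \D(\TT_{N-1}(R)),
$$
where $\mathbf F$ is the fully faithful triangle functor of Proposition~\ref{proposition 10} and $q$ is the canonical functor. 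Each of the three functors is exact and preserves coproducts, hence so is $G$; it remains to show that $G$ is fully faithful and dense.

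For full faithfulness the key point is that $\mathbf F$ sends K-projective $N$-complexes to K-projective complexes, i.e. $\mathbf F\bigl(\K_N^{Proj}(\Proj R)\bigr)$ is contained in the full subcategory $\mathcal K$ of $\K(\Proj \TT_{N-1}(R))$ of K-projective complexes. Granting this, $q$ is fully faithful on $\mathcal K$ (a K-projective complex $X$ satisfies $\Hom_{\D(\TT_{N-1}(R))}(X,-)\cong\Hom_{\K(\TT_{N-1}(R))}(X,-)$), $\mathbf F$ is fully faithful by Proposition~\ref{proposition 10}, and the first arrow is an equivalence, so $G$ is fully faithful. To establish the key point, let $P^\bullet\in\K_N^{Proj}(\Proj R)$ and let $\sigma^{\le n}P^\bullet$ denote the brutal truncation keeping the terms in degrees $\le n$. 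Each $\sigma^{\le n}P^\bullet$ is a bounded-above $N$-complex of projectives, the maps $\sigma^{\le n}P^\bullet\hookrightarrow\sigma^{\le n+1}P^\bullet$ are degreewise split monomorphisms, and $P^\bullet\cong\mathrm{hocolim}_n\,\sigma^{\le n}P^\bullet$ in $\K_N(\Proj R)$ via the telescope triangle. Since $\mathbf F$ is exact and preserves coproducts, $\mathbf F(P^\bullet)\cong\mathrm{hocolim}_n\,\mathbf F(\sigma^{\le n}P^\bullet)$; and an inspection of Construction~\ref{construction 01} shows that $\mathbf F$ takes a bounded-above $N$-complex of projectives to a bounded-above complex of projective $\TT_{N-1}(R)$-modules, because $\mathbf F(X^\bullet)^i$ is assembled only from the modules $X^{j}$ with $j$ in an interval $[a_i,a_i+N-2]$ and $a_i\to+\infty$ as $i\to+\infty$. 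As bounded-above complexes of projectives are K-projective and $\mathcal K$ is a localizing subcategory of $\K(\Proj \TT_{N-1}(R))$, we get $\mathbf F(P^\bullet)\in\mathcal K$.

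For density, note that the essential image $\mathcal D'$ of $G$ is a triangulated subcategory of $\D(\TT_{N-1}(R))$ closed under coproducts, hence localizing; so it suffices to produce a generating set of $\D(\TT_{N-1}(R))$ lying in $\mathcal D'$. For $1\le j\le N-1$ let $S_j^\bullet$ be the $N$-complex with $R$ in degree $j-1$, zeros elsewhere, and all differentials zero. A short computation with the null-homotopy relation gives $\Hom_{\K_N(R)}(S_j^\bullet,Y^\bullet)\cong\Hom_R\bigl(R,\HE^{j-1}_1(Y^\bullet)\bigr)$, which vanishes whenever $Y^\bullet$ is $N$-exact; hence $S_j^\bullet\in\K_N^{Proj}(\Proj R)$. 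Evaluating $\mathbf F$ on $S_j^\bullet$ via Construction~\ref{construction 01} (with the convention $e^{N}_\lambda(R):=0$), one finds that $\mathbf F(S_j^\bullet)$ is concentrated in degrees $-1,0$ and is the two-term complex $e^{j+1}_\lambda(R)\xrightarrow{\,-\iota\,}e^{j}_\lambda(R)$ for the canonical, degreewise split, monomorphism $\iota$ of representations of $A_{N-1}$; as this is a projective resolution of the $\TT_{N-1}(R)$-module $M_j$ given by $R$ at the single vertex $j$, it follows that $G(S_j^\bullet)\cong M_j$. Finally, the degreewise split short exact sequences $0\to e^{j+1}_\lambda(R)\to e^{j}_\lambda(R)\to M_j\to 0$ (together with $e^{N-1}_\lambda(R)=M_{N-1}$) show, by descending induction on $j$, that each indecomposable projective $e^{j}_\lambda(R)$ lies in the localizing subcategory generated by $M_1,\dots,M_{N-1}$; since the $e^{j}_\lambda(R)$ generate $\D(\TT_{N-1}(R))$, so do the $M_j$, and they lie in $\mathcal D'$. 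Therefore $G$ is dense, hence a triangle equivalence.

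The principal obstacle is the claim that $\mathbf F$ preserves K-projectivity, which is the one ingredient not formally implied by Proposition~\ref{proposition 10}: it rests on recognizing $\K_N^{Proj}(\Proj R)$ as the localizing subcategory of $\K_N(\Proj R)$ generated by bounded-above $N$-complexes of projectives and on the elementary but slightly fiddly check that $\mathbf F$ respects boundedness above. The density step, by contrast, is a direct, if tedious, matrix computation identifying $\mathbf F(S_j^\bullet)$ with the standard projective resolution of $M_j$.
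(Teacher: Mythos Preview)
Your density argument is correct and close in spirit to the paper's: the paper exhibits the indecomposable projectives $\mathcal R_i^\bullet=e^i_\lambda(R)$ (viewed as stalk complexes) as lying in the essential image of $\mathbf F$ restricted to K-projective $N$-complexes, whereas you instead hit the one-vertex modules $M_j$ via the stalk $N$-complexes $S_j^\bullet$ and then recover the $e^i_\lambda(R)$ by the obvious short exact sequences. Both routes place a set of compact generators of $\D(\TT_{N-1}(R))$ in the image of a coproduct-preserving triangle functor, and density follows.

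The full-faithfulness step, however, contains a genuine error. There are no chain maps $\sigma^{\le n}P^\bullet\hookrightarrow\sigma^{\le n+1}P^\bullet$ in general: the brutal truncation keeping degrees $\le n$ is a \emph{quotient} of $P^\bullet$, not a subcomplex, because commutativity of the square at degree $n$ would force $d^n=0$. (Dually, $\sigma^{\ge n}P^\bullet$ is a subcomplex, but those truncations are bounded \emph{below}, and bounded-below complexes of projectives are not K-projective.) In fact your argument, if it worked, would prove too much: it never uses that $P^\bullet$ is K-projective, so it would show that $\mathbf F$ sends \emph{every} object of $\K_N(\Proj R)$ to a K-projective complex---equivalently that every complex of projective $\TT_{N-1}(R)$-modules is K-projective---contradicting, for instance, the existence of non-contractible acyclic complexes of projectives over $\TT_{N-1}(R)$ whenever $R$ is not of finite global dimension. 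Concretely, for $N=2$ and $R=k[x]/(x^2)$, the doubly periodic complex $\cdots\xrightarrow{x}R\xrightarrow{x}R\xrightarrow{x}\cdots$ is acyclic and non-contractible, hence not K-projective, yet would fall under your hocolim description.

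A repair along the lines the paper implicitly uses: observe from Construction~\ref{construction 01} that $\mathbf F$ carries $\K_N^{\bb}(\Projj R)$ into $\K^{\bb}(\Projj\TT_{N-1}(R))$, hence into the K-projective complexes. Thus the composite $G$ is fully faithful on compact objects and, by your own density step, sends a set of compact generators of $\D_N(R)$ onto a set of compact generators of $\D(\TT_{N-1}(R))$. A standard d\'evissage (e.g., the usual argument that a coproduct-preserving triangle functor between compactly generated triangulated categories which is fully faithful on compacts and essentially surjective onto compacts is an equivalence) then gives full faithfulness on all of $\D_N(R)$ without needing to know in advance that $\mathbf F$ preserves K-projectivity.
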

\begin{proof}
By theorem \ref{theorem 10}  $\K_{N}^{Proj}(\Proj R)\cong \D_{N}(R)$ and $\K_{Proj}(\Proj \TT_{N-1}(R))\cong \D(\TT_{N-1}(R))$, and we have the following diagram:
$$\xymatrix{ \K_{N}(\Proj R) \ar[r]^{\mathbf{F}\quad} &  \K(\Proj \TT_{N-1}(R)) \\
 \K_{N}^{Proj}(\Proj R) \ar@{^{(}->}[u] & \K_{Proj}(\Proj \TT_{N-1}(R)) \ar@{^{(}->}[u] }$$ 
 In addition $\D(\TT_{N-1}(R))^c\cong K^{\bb}(\Projj \TT_{N-1}(R))$. 
For $1\leq i \leq N-1$ let $\mathcal{R}_i$ be the following projective representation of $A_{N-1}$ :
$$0\rightarrow 0 \rightarrow \cdots \rightarrow R \rightarrow R \rightarrow \cdots \rightarrow R$$ where $R$ start in $i$-th position with identity morphisms afterward. 
\\
We can show that $\K^{\bb}(\Projj \TT_{N-1}(R))=\Thick(\lbrace \mathcal{R}^{\bullet}_1,...,\mathcal{R}^{\bullet}_{N-1}\rbrace)$ whenever $\mathcal{R}^{\bullet}_i$ is a complex $\cdots\rightarrow 0 \rightarrow \mathcal{R}_i \rightarrow 0\rightarrow 0 \rightarrow \cdots$ concentrated in degree 0. Now we show that each $\mathcal{R}^{\bullet}_i$ belong to $\im \mathbf{F}$. Let $R^\bullet$ be a complex $\cdots\rightarrow 0 \rightarrow R \rightarrow 0\rightarrow 0 \rightarrow \cdots$ concentrated in degree 0. $\mathbf{F}(\Sigma(\Theta^{N-2}R^\bullet))=\mathcal{R}^{\bullet}_{N-1}$ and $\mathbf{F}(\Theta^{N-1}R^\bullet)=\mathcal{R}^{\bullet}_{1}$, hence $\mathcal{R}^{\bullet}_{N-1},\mathcal{R}^{\bullet}_{1} \in \im \mathbf{F}$. On the other hand $\mathbf{F}(\Theta^{N-3}R^\bullet)=\cdots\rightarrow 0\rightarrow \mathcal{R}_{N-1} \rightarrow \mathcal{R}_{N-2} \rightarrow 0 \rightarrow \cdots$ and therefore there exists a short exact sequence $0^\bullet \rightarrow \mathcal{R}^{\bullet}_{N-2} \rightarrow \mathbf{F}(\Theta^{N-3}R^\bullet) \rightarrow \mathcal{R}^{\bullet}_{N-1}[1]\rightarrow 0^\bullet$ with degree-wise split exact sequences. So there exists a triangle $\mathcal{R}^{\bullet}_{N-2} \rightarrow \mathbf{F}(\Theta^{N-3}R^\bullet) \rightarrow \mathcal{R}^{\bullet}_{N-1}[1] \rightsquigarrow$ in $\K(\Proj \TT_{N-1}(R))$, hence $\mathcal{R}^{\bullet}_{N-2} \in \im\mathbf{F}$, since $\mathbf{F}(\Theta^{N-3}R^\bullet),\mathcal{R}^{\bullet}_{N-1}[1] \in \im\mathbf{F}$. Similarly by induction we can say that $\mathcal{R}^{\bullet}_i \in \im\mathbf{F}$ for $2\leq i \leq N-3$. Hence $\Thick(\lbrace \mathcal{R}^{\bullet}_1,...,\mathcal{R}^{\bullet}_{N-1}\rbrace) \subseteq \im\mathbf{F} \subseteq \D(\TT_{N-1}(R))$. But $\im\mathbf{F}$ is closed under coproduct and contains compact objects, therefore the restriction of functor $\mathbf{F}$ to $\D_N(R)$ is dense hence $\D_{N}(R)\cong \D(\TT_{N-1}(R))$. 
\end{proof} 
According to the above theorem, we have a triangle equivalence $$\D^-_N(R)\cong \D^-(\TT_{N-1}(R))$$ thus $\xymatrix{F|_{\K_N^-(\Proj R)}:\K_N^-(\Proj R) \ar[r]^{\quad \cong} & \K^-(\Proj \TT_{N-1}(R))}$, see \cite[Corollaries 4.11, 2.17]{IKM14}. Moreover $F|_{\K_N^-(\Proj R)}$ induces some triangle equivalences between subcategories of $\K_N^-(\Proj R)$ and subcategories of $\K^-(\Proj \TT_{N-1}(R))$, see \cite[Corollary 4.15]{IKM14}. We summarize all of these equivalences in the following diagram. Note that the existence of the first row follows from proposition \ref{proposition 10}.  
$$\xymatrix{ \K_{N}(\Proj R) \ar@{^{(}->}[r]^{\mathbf{F}\quad} &  \K(\Proj \TT_{N-1}(R)) \\
 \K_{N}^{-}(\Proj R) \ar@{^{(}->}[u] \ar[r]^{\cong \qquad} & \K^{-}(\Proj \TT_{N-1}(R)) \ar@{^{(}->}[u] \\
 \K_{N}^{-,\bb}(\Proj R) \ar@{^{(}->}[u] \ar[r]^{\cong \qquad} & \K^{-,\bb}(\Proj \TT_{N-1}(R)) \ar@{^{(}->}[u] \\
 \K_{N}^{\bb}(\Proj R) \ar@{^{(}->}[u] \ar[r]^{\cong \qquad} & \K^{\bb}(\Proj \TT_{N-1}(R)) \ar@{^{(}->}[u] \\
 \K_{N}^{\bb}(\Projj R) \ar@{^{(}->}[u] \ar[r]^{\cong \qquad} & \K^{\bb}(\Projj \TT_{N-1}(R)) \ar@{^{(}->}[u]
 }
  $$
At the end of this section we show that the functor $\mathbf{F}$ is dense, hence there exist an triangle equivalence between $\K_N(\Proj R)$ and $\K(\Proj \TT_{N-1}(R))$.
Before we give the proof we need to introduce a another functor.
\\
Let $Q$ be the quiver of type $A_n$. Any projective representation $\mathcal{P}$ of $Q$ is of the form $\mathcal{P}=\oplus_{i=1}^{n} e_{\lambda}^i(P^i)$, where $P^i$ is the cokernel of split monomorphism $\mathcal{P}_{i-1}\rightarrow \mathcal{P}_{i}$. For any projective representation $\mathcal{P}=\oplus_{i=1}^{n} e_{\lambda}^i(P^i)$ of $Q$, set $\widehat{\mathcal{P}}=\oplus_{i=1}^{n} e_{\rho}^i(P^i)$. Clearly $\widehat{\mathcal{P}}$ is an object in the category $\Proj^{op}(A_n)$, where $\Proj^{op}(A_n)$ is the category of all representations by projective modules with split epimorphism maps.
\\
Now we define a functor$\,\,\, \widehat{}:\Proj A_n\rightarrow \Proj^{op}A_n$ such that any $\mathcal{P}\in \Proj A_n$ is mapped under $\,\widehat{}\,\,$ to $\widehat{\mathcal{P}}$, as defined above, and for any morphism $\varphi=(\varphi_t)_{1 \leq t \leq n}$ in $\Hom( e_{\lambda}^i(P^i),e_{\lambda}^j(P^j))$ define $\widehat{\varphi}$ as follows:
$$\widehat{\varphi}=\left\lbrace \begin{array}{ll}
0 & i<j \\
(\varphi_t)_{t} & i\geq j
\end{array} \right.$$
It is easy to check that $\, \widehat{}\, $ is in fact an equivalence of categories. We also see that for a finite quiver $Q$, $\, \widehat{}\, $ is an equivalence of categories, see \cite{AEHS11}. The functor $\, \widehat{} \, $ can be naturally extended to a functor $\K(\Proj A_n)\rightarrow \K(\Proj^{op} A_n)$ which we denote again by $\,  \widehat{} \,\,\, $. So for any $X^\bullet \in \K(\Proj A_n)$, let $\widehat{X^\bullet}$ be the complex with $\widehat{X_i}$ as its $i$-th term and $\widehat{d_i}$ as its $i$-th diffrential. The functor $\,\,  \widehat{}\,\, $ also is an equivalence of homotopy categories. 
\\
we also need the following description of compact objects in $\K_N(\Proj R)$. Neeman \cite{Nee08} showed that an object $X^\bullet$ of $\K(\Proj R)$ is compact if and only if it is isomorphic, in $\K(\Proj R)$, to a complex $\Y$ satisfying
\begin{itemize}
\item[$(i)$] $\Y$ is a complex of finitely generated projective modules.
\item[$(ii)$] $Y^i=0$ if $i<<0$.
\item[$(iii)$] $\HH^i(\Y^*)=0$ if $i<<0$, where $\Y^*=\Hom(Y^\bullet,R)$.
\end{itemize}
He also showed that when the compact objects generate the category $K(\Proj R)$. 
\begin{proposition}
If $R$ is a left coherent ring, then the category $\K(\Proj R)$ is compactly generated.
\end{proposition}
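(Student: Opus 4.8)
The plan is to deduce the statement from the structural results of Neeman \cite{Nee08} recalled immediately above. First, since $\Proj R$ is closed under small coproducts in $\Mod R$, the category $\K(\Proj R)$ has all small coproducts (computed termwise), so that compact generation is a meaningful notion. Up to isomorphism the complexes $\Y$ satisfying $(i)$--$(iii)$ form a \emph{set} $\mathcal{G}$: there are only set-many isomorphism classes of finitely generated projective $R$-modules, and set-many bounded-below sequences assembled from them, while $(iii)$ is merely an extra property. By the recalled characterization every $\Y\in\mathcal{G}$ is compact in $\K(\Proj R)$, so $\mathcal{G}$ will be our candidate generating set; note that the complex $R$ concentrated in degree $0$ and all of its shifts $R[n]$ belong to $\mathcal{G}$.

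Next I would check that $\mathcal{G}$ generates. Suppose $X^\bullet\in\K(\Proj R)$ satisfies $\Hom_{\K(\Proj R)}(\Y,X^\bullet[n])=0$ for all $\Y\in\mathcal{G}$ and all $n\in\Z$. Since $\K(\Proj R)$ is a full subcategory of $\K(R)$ one has $\Hom_{\K(\Proj R)}(R,X^\bullet[n])\cong\Hom_{\K(R)}(R,X^\bullet[n])\cong\HH^{n}(X^\bullet)$, so applying the hypothesis to the shifts of $R$ forces $X^\bullet$ to be acyclic, i.e.\ $X^\bullet\in\K_{\ac}(\Proj R)$. It then remains to upgrade ``acyclic'' to ``null-homotopic'', now exploiting the vanishing of $\Hom(\Y,X^\bullet[n])$ for the \emph{remaining} members of $\mathcal{G}$ --- those $\Y$ whose $R$-dual $\Y^*=\Hom_R(Y^\bullet,R)$ is only eventually acyclic, i.e.\ the ``Tate-type'' compact objects that cannot be replaced by bounded complexes.

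This last step is the heart of the matter, the only non-formal ingredient, and precisely the point at which left coherence of $R$ is used; it is Neeman's theorem \cite{Nee08}, which in the write-up I would simply invoke. Conceptually, coherence is what lets one control acyclic complexes of projective $R$-modules by \emph{finitely generated} data --- through the duality $\Y\mapsto\Y^*$ together with condition $(iii)$ --- so that $\K(\Proj R)$ fits into a recollement with $\D(R)$, which is compactly generated by $R$, and compact generation is then inherited along the left adjoints; for non-coherent $R$ this finitely generated control breaks down and one obtains only $\aleph_1$-compact generation. The obstacle that cannot be circumvented by formal manipulation is exactly the construction of enough finitely generated compact objects to detect every acyclic complex of projectives, and that is the content of \cite{Nee08}.
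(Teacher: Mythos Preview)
The paper does not actually prove this proposition; it is stated without proof and attributed to Neeman \cite{Nee08}, serving only as background for the subsequent argument. Your proposal is therefore not being compared against a proof in the paper but rather against a bare citation.

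That said, your sketch is a reasonable outline of the architecture of Neeman's argument, and you are right to identify that the substantive content --- passing from ``acyclic'' to ``null-homotopic'' using only finitely generated test objects --- is exactly where left coherence enters and is the part you cannot do by formal manipulation. Since you explicitly say you would invoke \cite{Nee08} for that step, your write-up would in the end amount to the same thing the paper does: cite Neeman. The additional paragraphs you provide are helpful exposition of \emph{why} the result holds, but they do not constitute an independent proof, and you should not present them as one. If the intent is simply to use the proposition as a black box (as the paper does), a one-line citation suffices; if the intent is to give a self-contained proof, you would need to reproduce Neeman's construction of the compact generators and his detection argument in full, which your proposal does not do.
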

This idea enables us to prove our main theorem: 
\begin{theorem}
\label{theorem 02}
For a left coherent ring $R$, we have  triangle equivalence
$$\K_N(\Proj R)\cong \K(\Proj \TT_{N-1}(R)).$$
\end{theorem}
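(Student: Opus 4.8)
Since $\mathbf{F}$ is already fully faithful by Proposition~\ref{proposition 10}, it is enough to prove that $\mathbf{F}$ is dense. The plan is to realise the essential image $\im\mathbf{F}$ as a localizing subcategory of $\K(\Proj \TT_{N-1}(R))$ and then to show it contains a set of compact generators. For the first point: $\mathbf{F}$ is defined degree-wise, so it commutes with arbitrary coproducts, while $\K_N(\Proj R)$ has countable coproducts and hence is idempotent complete (any triangulated category with countable coproducts is). Therefore $\im\mathbf{F}$ is a thick, coproduct-closed --- that is, localizing --- triangulated subcategory of $\K(\Proj \TT_{N-1}(R))$, so $\im\mathbf{F}\supseteq\Loc(\CG)$ for any subset $\CG\subseteq\im\mathbf{F}$.

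The coherence hypothesis enters next. Since $R$ is left coherent, so is the triangular matrix ring $\TT_{N-1}(R)$, and the Proposition recalled above (Neeman) gives that $\K(\Proj \TT_{N-1}(R))$ is compactly generated; moreover, by Neeman's description \cite{Nee08}, a representative set of its compact objects --- hence a compact generating set --- consists of the complexes $\mathbf{Y}$ of finitely generated projective $\TT_{N-1}(R)$-modules with $Y^i=0$ for $i\ll 0$ and $\HH^i(\mathbf{Y}^{\ast})=0$ for $i\ll 0$, where $\mathbf{Y}^{\ast}=\Hom(\mathbf{Y},\TT_{N-1}(R))$. So it suffices to show that every such $\mathbf{Y}$ lies in $\im\mathbf{F}$.

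Fix $\mathbf{Y}$ as above. The stupid truncation gives a degree-wise split short exact sequence $0\to\sigma_{>n}\mathbf{Y}\to\mathbf{Y}\to\sigma_{\le n}\mathbf{Y}\to 0$, hence a triangle in $\K(\Proj \TT_{N-1}(R))$; its bounded entry $\sigma_{\le n}\mathbf{Y}$ lies in $\K^{\bb}(\Projj \TT_{N-1}(R))=\Thick(\{\CR^{\bullet}_1,\dots,\CR^{\bullet}_{N-1}\})\subseteq\im\mathbf{F}$ by the argument in the proof of Theorem~\ref{theorem 11}, so it remains to place the tail $\sigma_{>n}\mathbf{Y}$ in $\im\mathbf{F}$ for $n$ large. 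For $n\gg 0$ the condition $\HH^i(\mathbf{Y}^{\ast})=0$ for $i\ll 0$ forces $(\sigma_{>n}\mathbf{Y})^{\ast}$ to be a bounded-above complex of finitely generated projective $\TT_{N-1}(R)^{\op}$-modules with cohomology concentrated in one degree --- i.e.\ $\sigma_{>n}\mathbf{Y}$ is the $\TT_{N-1}(R)$-dual of a projective resolution of a finitely presented $\TT_{N-1}(R)^{\op}$-module. Since $\TT_{N-1}(R)^{\op}\cong\TT_{N-1}(R^{\op})$ (transpose of triangular matrices), $(\sigma_{>n}\mathbf{Y})^{\ast}$ lies in $\K^{-}(\Proj \TT_{N-1}(R^{\op}))$, which by the $\K^{-}$-equivalence following Theorem~\ref{theorem 11} (applied to the ring $R^{\op}$; see \cite{IKM14}) is the essential image of $\K_N^{-}(\Proj R^{\op})$ under $\mathbf{F}_{R^{\op}}$; so $(\sigma_{>n}\mathbf{Y})^{\ast}\cong\mathbf{F}_{R^{\op}}(\mathbf{Q})$ for some bounded-above $N$-complex $\mathbf{Q}$ of projective $R^{\op}$-modules. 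Dualizing back, and using that $\mathbf{F}$ is compatible with the duality $\Hom(-,\TT_{N-1}(R))$ --- i.e.\ $\mathbf{F}_{R^{\op}}(\mathbf{Q})^{\ast}$ is naturally, up to a degree shift, $\mathbf{F}_R(\mathbf{Q}^{\vee})$ where $\mathbf{Q}^{\vee}=\Hom(\mathbf{Q},R)$ is the $N$-complex dual --- one gets $\sigma_{>n}\mathbf{Y}\in\im\mathbf{F}$, whence $\mathbf{Y}\in\im\mathbf{F}$.

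The main obstacle is precisely this duality compatibility: one must verify that applying $\Hom(-,\TT_{N-1}(R))$ to $\mathbf{F}_{R^{\op}}(\mathbf{Q})$ recovers, up to homotopy and a degree shift, the image under $\mathbf{F}_R$ of the $N$-complex dual of $\mathbf{Q}$, which amounts to tracking how the alternating $\mu$/$\lambda$ differentials of Construction~\ref{construction 01} behave under transposition of triangular matrices --- most cleanly through the equivalence $\widehat{(-)}$ of \cite{AEHS11} together with the EER description \cite{EER09} of projective and injective representations of $A_{N-1}$. Everything else --- the localizing-subcategory formalism, the reduction to compact generators via coherence and Neeman's theorems, and the splitting-off of a bounded complex using Theorem~\ref{theorem 11} --- is essentially formal; the genuinely new content, and the only place where the fine combinatorics of $\TT_{N-1}(R)$ and of $\mathbf{F}$ are really needed, is this identification of $\TT_{N-1}(R)$-complexes arising as such duals with images of $N$-complexes.
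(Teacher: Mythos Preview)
Your approach is correct and is, at its core, the same as the paper's: reduce to showing that the compact objects of $\K(\Proj\TT_{N-1}(R))$ lie in $\im\mathbf{F}$, and do this via the $\Hom(-,R)$ duality together with the known $\K^{-}$ (or $\K^{-,\bb}$) equivalence for $R^{\op}$. The ``main obstacle'' you flag --- that dualizing $\mathbf{F}_{R^{\op}}(\mathbf{Q})$ recovers $\mathbf{F}_R$ of the $N$-complex dual --- is exactly the content the paper encodes in the commutative square
\[
\mathbf{F}|_{\mathcal{T}}=\widecheck{(\,\,)}\circ\Hom(-,R)\circ\mathbf{F}^{\op}\circ\Hom(-,R),
\]
using the equivalence $\widehat{(\,\,)}:\Proj A_{N-1}\to\Proj^{\op}A_{N-1}$ to mediate between projective and ``op-projective'' representations.

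The only real difference is organizational. The paper does not truncate: it introduces the subcategory $\mathcal{T}\subseteq\K_N^{+}(\Projj R)$ of bounded-below $N$-complexes with eventually vanishing dual cohomology, observes that $\Hom(-,R)$ identifies $\mathcal{T}^{\op}$ with $\K_N^{-,\bb}(\Projj R^{\op})$, and then the square above shows in one stroke that $\mathbf{F}|_{\mathcal{T}}$ is an equivalence onto $\K(\Projj\TT_{N-1}(R))^c$. Your extra step of splitting a compact $\mathbf{Y}$ into $\sigma_{\le n}\mathbf{Y}\in\K^{\bb}(\Projj\TT_{N-1}(R))$ and a tail $\sigma_{>n}\mathbf{Y}$ is harmless but unnecessary: the tail (indeed the whole of $\mathbf{Y}$) already satisfies Neeman's compactness conditions, so one may dualize it directly and land in $\K^{-,\bb}(\Projj\TT_{N-1}(R^{\op}))$ without first peeling off a bounded piece. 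In short, the paper packages your argument as a single diagram of equivalences rather than an object-by-object chase, but the substance is the same.
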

\begin{proof}
In view of proposition \ref{proposition 10}, the functor $\mathbf{F}$ is full and faithful. Now we show that $\mathbf{F}$ is dense. Let $\mathcal{T}$ be a triangle subcategory of $\K_N(\Proj R)$ such that for all $\T\in \mathcal{T}$ we have the following conditions
\begin{itemize}
\item[(1)] $\Tt \in \K_{N}^{+}(\Projj R)$;
\item[(2)] There exists an integer $n\in \Z$ such that for every $i<n$ and $1 \leq r \leq N-1$, $\HH^i_r\Tt^*=0$ where $\Tt^*$ denotes the induced complex $\Hom(\Tt,R)$.
\end{itemize}
Clearly the duality $\Hom(-,R):\Projj R\longrightarrow \Projj R^{op}$ induces an equivalence $\mathcal{T}^{op}\cong \K_N^{-,b}(\Projj R^{op})$ of triangulated categories. On the other hand, by restricting the functor $\mathbf{F}$ to $\mathcal{T}$, we have 
$$\xymatrix{ \K_{N}(\Proj R) \ar[r]^{\mathbf{F} \quad} &  \K(\Proj \TT_{N-1}(R)) \\
 \mathcal{T} \ar@{^{(}->}[u] \ar[r]^{\mathbf{F}|_{\mathcal{T}}\,\,\,\,\,\,\quad} & \K({\Projj \TT_{N-1}(R)})^c \ar@{^{(}->}[u] }$$
 We want to show that $\mathbf{F}|_{\mathcal{T}}$ is an equivalence. It is easy to check that we have the following commutative diagram
$$\xymatrix{ \mathcal{T} \ar[r]^{\mathbf{F}|_{\mathcal{T}}\,\,\,\,\,\,\quad} \ar[d]^{\Hom(-,R)}_{\cong} &  \K({\Projj \TT_{N-1}(R)})^c \\
 \K_N^{-,b}(\Projj R^{op}) \ar[r]^{\mathbf{F}^{op} \quad}_{\cong \quad}  & \K_N^{-,b}(\Projj \TT_{N-1}(R^{op})) \ar[u]^{\widecheck{} \,\,\, \circ \Hom(-,R)}_{\cong} }$$
where $\mathbf{F}^{op}$ is the functor $\mathbf{F}$ in construction \ref{construction 01} whenever we define it on $\C_N(\Proj R^{op})$ and $ \, \widecheck{} \,$ is the quasi-inverse of the functor  $\, \widehat{}\, $. Therefore $\mathbf{F}|_{\mathcal{T}}=\widecheck{} \,\, \circ \Hom(-,R)\circ \mathbf{F}^{op}\circ \Hom(-,R) $, hence it is an equivalence. It shows that $\K({\Projj \TT_{N-1}(R)})^c\subseteq \im \mathbf{F}$. On the other hand  $\im \mathbf{F}$ is closed under coproduct and contains compact objects, therefore $\im \mathbf{F}=\K(\Proj \TT_{N-1}(R))$.
\end{proof}
\begin{corollary}
If $R$ is a left coherent ring, then the category $\K_N(\Proj R)$ is compactly generated.
\end{corollary}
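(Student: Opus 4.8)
The plan is to build on Proposition~\ref{proposition 10}, which already provides a fully faithful triangle functor $\mathbf{F}\colon\K_N(\Proj R)\to\K(\Proj\TT_{N-1}(R))$; all that remains is to prove that $\mathbf{F}$ is dense. The essential image $\im\mathbf{F}$ is a triangulated subcategory of $\K(\Proj\TT_{N-1}(R))$, and since $\mathbf{F}$ is assembled termwise from finite and infinite direct sums in Construction~\ref{construction 01}, it commutes with all coproducts; hence $\im\mathbf{F}$ is a \emph{localizing} subcategory. The ring $\TT_{N-1}(R)$ is again left coherent, so by the proposition preceding the theorem $\K(\Proj\TT_{N-1}(R))$ is compactly generated. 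A localizing subcategory containing a generating set of compact objects equals the whole category, so it suffices to show $\K(\Proj\TT_{N-1}(R))^{c}\subseteq\im\mathbf{F}$.

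To locate the compact objects I would use Neeman's description recalled above: up to homotopy a compact object of $\K(\Proj\TT_{N-1}(R))$ is a bounded-below complex $\Y$ of finitely generated projective $\TT_{N-1}(R)$-modules with $\HH^i(\Y^*)=0$ for $i\ll0$, where $\Y^*=\Hom(\Y,\TT_{N-1}(R))$. I would realize all such objects through the following subcategory of $\K_N(\Proj R)$: let $\mathcal{T}$ be the full triangulated subcategory of complexes $\Tt$ with $\Tt\in\K_N^{+}(\Projj R)$ for which there is $n\in\Z$ with $\HH^i_r(\Tt^*)=0$ for all $i<n$ and $1\le r\le N-1$, where $\Tt^*=\Hom(\Tt,R)$. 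The duality $\Hom(-,R)$ restricts to a duality between finitely generated projective $R$- and $R^{\op}$-modules and induces an equivalence $\mathcal{T}^{\op}\cong\K_N^{-,\bb}(\Projj R^{\op})$.

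The core step is then to show that $\mathbf{F}$ carries $\mathcal{T}$ into $\K(\Proj\TT_{N-1}(R))^{c}$ and restricts there to an equivalence. For this I would assemble the commutative square relating $\mathbf{F}|_{\mathcal{T}}$ to the opposite instance $\mathbf{F}^{\op}$ of Construction~\ref{construction 01} (built on $N$-complexes of $R^{\op}$-modules), the two copies of $\Hom(-,R)$, and the equivalence $\,\widehat{}\,\colon\Proj A_{N-1}\to\Proj^{\op}A_{N-1}$ between projective representations of $A_{N-1}$ and representations with split-epimorphism structure maps. By Theorem~\ref{theorem 11} and the discussion following it, $\mathbf{F}^{\op}$ restricts to an equivalence $\K_N^{-,\bb}(\Projj R^{\op})\cong\K^{-,\bb}(\Projj\TT_{N-1}(R^{\op}))$. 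Reading the square off, $\mathbf{F}|_{\mathcal{T}}$ is the composite $\widecheck{}\circ\Hom(-,R)\circ\mathbf{F}^{\op}\circ\Hom(-,R)$ of four equivalences, where $\,\widecheck{}\,$ is a quasi-inverse of $\,\widehat{}\,$; hence $\mathbf{F}|_{\mathcal{T}}$ is an equivalence onto $\K(\Proj\TT_{N-1}(R))^{c}$. Combined with the first paragraph this yields $\im\mathbf{F}=\K(\Proj\TT_{N-1}(R))$.

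The hard part will be the verification that this square genuinely commutes, and it splits into two issues. First, one must check that $\mathbf{F}(\Tt)$ satisfies Neeman's three conditions whenever $\Tt\in\mathcal{T}$; condition~(iii), which constrains the dual complex $\mathbf{F}(\Tt)^{*}$, is exactly what is controlled by the second defining property of $\mathcal{T}$ and is the most calculation-heavy. Second --- the genuinely delicate interchange --- one must match the asymmetry of Construction~\ref{construction 01}, in which the projective-representation blocks are built from the maps $\mu^{i}$ in even degrees and from the $\lambda^{i}$ in odd degrees, against the passage from projective representations of $A_{N-1}$ to those with split-epimorphism structure maps effected by $\,\widehat{}\,$; tracing the two dualities consistently through this swap is where the bookkeeping is heaviest, though no idea beyond Construction~\ref{construction 01} and the equivalence $\,\widehat{}\,$ is needed.
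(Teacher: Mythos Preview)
Your proposal is correct and follows exactly the argument the paper gives for Theorem~\ref{theorem 02}; indeed the commutative square with $\mathbf{F}^{\op}$, the duality $\Hom(-,R)$, and the functor $\,\widecheck{}\,$ is precisely how the paper establishes that $\mathbf{F}$ is dense. The only remark is that in the paper this corollary carries no separate proof: once Theorem~\ref{theorem 02} is in hand, the corollary is immediate from Neeman's proposition applied to the left coherent ring $\TT_{N-1}(R)$, so you could simply cite that theorem rather than reproduce its proof.
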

In a dual manner, in view of construction \ref{construction 01} and lemmas \ref{lemma 12}, \ref{lemma 13} and \ref{lemma 14} we can embed the category $\K_N(\Inj R)$ into the category $\K(\Inj \TT_{N-1}(R))$. Since the compact objects of $\K(\Inj\TT_{N-1}(R))$ are different from $\K(\Proj \TT_{N-1}(R))$, the proof of theorem \ref{theorem 02} dose not work. However, when $R$ is an artin algebra the embedding is dense.
\begin{proposition}
Let $\Lambda$ be an artin algebra. We have a triangle equivalence
$$\K_N(\Inj \Lambda)\cong \K(\Inj \TT_{N-1}(\Lambda)). $$
\end{proposition}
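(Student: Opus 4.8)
The plan is to run the argument of Theorem \ref{theorem 02} with the roles of projectives and injectives interchanged, replacing the $R$-dual $\Hom(-,R)$ by the usual duality $D\colon\mmod\Lambda\rightleftarrows\mmod\Lambda^{\op}$ of an artin algebra. Recall the fully faithful triangle functor $\G\colon\K_N(\Inj\Lambda)\longrightarrow\K(\Inj\TT_{N-1}(\Lambda))$ obtained by dualizing Construction \ref{construction 01} and Lemmas \ref{lemma 12}, \ref{lemma 13}, \ref{lemma 14} (the dual of Proposition \ref{proposition 10}); here $\TT_{N-1}(\Lambda)$ is again an artin algebra, hence two-sided noetherian, so coproducts of injectives are injective and both homotopy categories admit coproducts, computed termwise. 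Being assembled degreewise out of finite biproducts, $\G$ commutes with these coproducts, so $\im\G$ is a coproduct-closed triangulated, hence localizing, subcategory of $\K(\Inj\TT_{N-1}(\Lambda))$. By Krause's theorem $\K(\Inj\TT_{N-1}(\Lambda))$ is compactly generated, and --- $\TT_{N-1}(\Lambda)$ being artin --- its compact objects are, up to homotopy equivalence, exactly the complexes in $\K^{+,\bb}(\Injj\TT_{N-1}(\Lambda))$, i.e.\ bounded below complexes of finitely generated injectives with bounded cohomology. It therefore suffices to prove $\K^{+,\bb}(\Injj\TT_{N-1}(\Lambda))\subseteq\im\G$; then $\im\G\supseteq\LLoc(\K(\Inj\TT_{N-1}(\Lambda))^{c})=\K(\Inj\TT_{N-1}(\Lambda))$, so $\G$ is dense and hence a triangle equivalence.

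For the inclusion, set $\mathcal{T}:=\K_N^{+,\bb}(\Injj\Lambda)$; one checks, using the dual of Lemma \ref{lemma 014} and the shape of Construction \ref{construction 01}, that $\G$ carries $\mathcal{T}$ into $\K^{+,\bb}(\Injj\TT_{N-1}(\Lambda))$. The duality $D$ is exact and contravariant, exchanges finitely generated projectives and finitely generated injectives, and --- through the isomorphism $\TT_{N-1}(\Lambda)^{\op}\cong\TT_{N-1}(\Lambda^{\op})$ (transposition along the anti-diagonal) --- exchanges projective/injective $\TT_{N-1}(\Lambda)$-modules with projective/injective $\TT_{N-1}(\Lambda^{\op})$-modules; hence it induces dualities $\mathcal{T}\simeq\K_N^{-,\bb}(\Projj\Lambda^{\op})$ and $\K^{+,\bb}(\Injj\TT_{N-1}(\Lambda))\simeq\K^{-,\bb}(\Projj\TT_{N-1}(\Lambda^{\op}))$. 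I would then check that the square
$$
\xymatrix{
\mathcal{T} \ar[r]^{\G|_{\mathcal{T}}\qquad} \ar[d]_{D}^{\cong} & \K^{+,\bb}(\Injj\TT_{N-1}(\Lambda)) \\
\K_N^{-,\bb}(\Projj\Lambda^{\op})^{\op} \ar[r]^{\mathbf{F}^{\op}\quad}_{\cong\quad} & \K^{-,\bb}(\Projj\TT_{N-1}(\Lambda^{\op}))^{\op} \ar[u]_{D}^{\cong}
}
$$
commutes up to natural isomorphism, where $\mathbf{F}^{\op}$ is the functor of Construction \ref{construction 01} built over $\Lambda^{\op}$, whose restriction to the indicated bounded subcategories is an equivalence by Corollary \ref{cor-01} and the diagram at the end of Section \ref{Some triangle equivalences between homotopy categories}. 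Granting commutativity, $\G|_{\mathcal{T}}=D\circ\mathbf{F}^{\op}\circ D$ is a composite of equivalences, so $\G|_{\mathcal{T}}\colon\mathcal{T}\xrightarrow{\ \sim\ }\K^{+,\bb}(\Injj\TT_{N-1}(\Lambda))$, which gives the required inclusion.

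The main obstacle is the commutativity of this square: one must verify that $D$ intertwines $\G$ with $\mathbf{F}^{\op}$, i.e.\ that $D$ sends an injective $\TT_{N-1}(\Lambda)$-module $I^{1}\oplus\cdots\oplus I^{N-1}\to\cdots\to I^{1}\oplus I^{2}\to I^{1}$ to the projective $\TT_{N-1}(\Lambda^{\op})$-module $DI^{1}\to DI^{1}\oplus DI^{2}\to\cdots\to DI^{1}\oplus\cdots\oplus DI^{N-1}$ compatibly with the block-matrix differentials $\mu^{i},\lambda^{i}$ of Construction \ref{construction 01}. This is a bookkeeping computation parallel to, but somewhat shorter than, the corresponding identification in the proof of Theorem \ref{theorem 02}: since $D$ already exchanges ``projective-shaped'' and ``injective-shaped'' representations of $A_{N-1}$, no auxiliary functor analogous to $\widehat{(-)}$ is needed here. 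Two ancillary points also require care: Krause's identification of $\K(\Inj\TT_{N-1}(\Lambda))^{c}$ with $\K^{+,\bb}(\Injj\TT_{N-1}(\Lambda))$ over the artin algebra $\TT_{N-1}(\Lambda)$, and the fact that over an artin algebra every injective module is a coproduct of the finitely many finitely generated indecomposable injectives --- this is what makes $\K(\Inj\TT_{N-1}(\Lambda))$ and $\K_N(\Inj\Lambda)$ closed under coproducts and legitimizes reducing the density of $\G$ to hitting the compact objects.
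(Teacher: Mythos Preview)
Your proof is correct but takes a genuinely different and more laborious route than the paper's. The paper exploits a shortcut specific to artin algebras: the Nakayama functor $-\otimes_\Lambda D(\Lambda)$ and its adjoint $\Hom_\Lambda(D(\Lambda),-)$ give an equivalence $\Proj\Lambda\simeq\Inj\Lambda$ of additive categories (and likewise for $\TT_{N-1}(\Lambda)$, which is again an artin algebra). Applying these termwise yields triangle equivalences $\K_N(\Proj\Lambda)\simeq\K_N(\Inj\Lambda)$ and $\K(\Proj\TT_{N-1}(\Lambda))\simeq\K(\Inj\TT_{N-1}(\Lambda))$, and one then reads off the result from the square
\[
\xymatrix{\K_N(\Proj\Lambda)\ar[r]^-{\simeq}\ar[d]_{\mathbf{F}}^{\simeq}&\K_N(\Inj\Lambda)\ar[d]^{\mathbf{G}}\\ \K(\Proj\TT_{N-1}(\Lambda))\ar[r]^-{\simeq}&\K(\Inj\TT_{N-1}(\Lambda))}
\]
in which three sides are already equivalences (the left vertical by Theorem~\ref{theorem 02}). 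No appeal to Krause's compact generation, no identification of $\K(\Inj\TT_{N-1}(\Lambda))^c$, and no block-matrix verification is needed.

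Your argument, by contrast, re-runs the compact-generation strategy of Theorem~\ref{theorem 02} on the injective side, substituting Krause's theorem for Neeman's and the artin duality $D$ for $\Hom(-,R)$. This is sound, and your observation that $D$ already interchanges the projective and injective shapes of $A_{N-1}$-representations---so that no analogue of $\widehat{(-)}$ is needed---is correct. The payoff of your approach is that it works directly with $\mathbf{G}$ and could in principle be adapted to locally noetherian settings where $\K(\Inj)$ is compactly generated but no Nakayama-type equivalence $\Proj\simeq\Inj$ is available; the cost is the commutativity bookkeeping you flag, together with the ancillary inputs on compact objects, all of which the paper's two-line reduction sidesteps entirely.
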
  
\begin{proof}
Let $D$ denote the duality between right and left $\Lambda$-modules. The adjoint pair of functors $-\otimes_{\Lambda} D(\Lambda)$ and $\Hom_{\Lambda}(D(\Lambda),-)$ induces an equivalence between $\Proj \Lambda$ and $\Inj \Lambda$, which resticts to an equivalence between $\Projj \Lambda$ and $\Injj \Lambda$.
Therefore the adjoint pair induces an equivalence between $\K_N(\Proj \Lambda)$ and $\K_N(\Inj \Lambda)$. So if we denote the embedding $\K_N(\Inj R)\hookrightarrow\K(\Inj \TT_{N-1}(R))$ by $\mathbf{G}$, then we have the following diagram:
$$\xymatrix{ \K_{N}(\Proj \Lambda) \ar[r]^{\cong \quad} \ar[d]^{\cong } &  \K_N(\Inj \Lambda) \ar@{^{(}->}[d]^{\mathbf{G} \quad} \\
 K(\Proj \TT_{N-1}(\Lambda))\ar[r]^{\cong} & K(\Inj \TT_{N-1}(\Lambda)) }$$ Hence $\mathbf{G}$ is an equivalence of categories. 
\end{proof}
As a final remark we will discuss about $N$-dualizing complex.
\begin{remark}
Let $R$ be a commutative noetherian ring with a dualizing complex $D$ and $Q$ be a finite quiver. In \cite{AEHS11} they showed that Grothendieck duality is extendable to path algebra $RQ$. Therefore when $Q=A_{N-1}$, we have
$$\D^{\bb}(\mmod \TT_{N-1}(R))^{op}\cong \D^{\bb}(\mmod \TT_{N-1}(R)).$$
On the other hand by \cite{IKM14} we have
 $$\D_N^{\bb}(\mmod R)\cong \D^{\bb}(\mmod \TT_{N-1}(R)).$$ 
 Hence $$\D_N^{\bb}(\mmod R)^{op}\cong \D_N^{\bb}(\mmod R). $$
 So in case $R$ has a dualizing complex, there exists a Grothendieck duality for derived category of $N$-complexes and the question is "What could be the definition of an $N$-dualizing complex?".
\end{remark}
\section{N-totally acyclic complexes:}
\label{N-totally acyclic complexes}
Let $\mathcal{A}$ be an additive category. We say that a complex $X^\bullet$ in $\mathcal{A}$ is acyclic if the complex $\Hom_{\mathcal{A}}(A,X^\bullet)$ of abelian groups is acyclic for all $A\in \mathcal{A}$. If in addition $\Hom_{\mathcal{A}}(X^\bullet,A)$ is acyclic for all $A\in \mathcal{A}$, then $X^\bullet$ is called totally acyclic. Let $\C_{\tac}(\mathcal{A})$ denote the full subcategory of $\C(\mathcal{A})$ consisting of totally acyclic complexes. Note that these definitions are up to isomorphism in $\K(\mathcal{A})$. The full triangulated subcategory
of $\K(\mathcal{A})$ consisting of totally acyclic complexes, will be denoted by $\K_{\tac}(\mathcal{A})$. For instance
if $\mathcal{A}=\Proj R$ is the class of projective objects in $\Mod R$ then the object $X^\bullet$ of $\K_{\tac}(\Proj R)$ is an  exact complex such that $\Hom_{R}(X^\bullet,P)$ is acyclic for all $P\in \Proj R$. The objects of $\K_{\tac}(\Proj R)$ will be called totally acyclic complexes of projectives  
\begin{remark}
\label{remark 03}
It is easy to see that $X^\bullet\in \K_{\tac}(\Proj R)$ if and only if $\Hom_{\K(\Proj R)}(P^\bullet,X^\bullet)=0$ and $\Hom_{\K(\Proj R)}(X^\bullet,P^\bullet)=0$ for all $P^\bullet\in \K^{\bb}(\Proj R)$. 
\end{remark}
Let $R$ be a Noetherian ring. For the rest of this section we are only considering the category $\Projj R$, i.e. the category of finitely generated projective left $R$-modules. Given an integer $n$ and a complex
$$\xymatrix{\cdots \ar[r]^{{d}^{n-3}} & X^{n-2} \ar[r]^{{d}^{n-2}} & X^{n-1} \ar[r]^{{d}^{n-1}} & X^{n} \ar[r]^{{d}^{n}} & X^{n+1} \ar[r]^{{d}^{n+1}} & X^{n+2} \ar[r]^{{d}^{n+2}}  & \cdots }$$ 
in $\Mod R$, we denote its brutal truncation at degree n by
$$\xymatrix{ \beta_{\leq n}(X^\bullet): \cdots \ar[r]^{{d}^{n-3}} & X^{n-2} \ar[r]^{{d}^{n-2}} & X^{n-1} \ar[r]^{{d}^{n-1}} & X^{n} \ar[r] &0 \ar[r] & 0 \ar[r]  & \cdots }$$
If $X^\bullet \in \K_{\tac}(\Projj R)$, then $\beta_{\leq n}(X^\bullet)\in \K^{-,\bb}(\Projj R)$. The brutal truncation at degree zero induces a map from the category $\K_{\tac}(\Projj R)$ to the category $\K^{-,\bb}(\Projj R)$. However, this map is not a functor. Now consider the singularity category 
$$\D_{\sg}^{\bb}(R)=\K^{-,\bb}(\Projj R)/\K^{\bb}(\Projj R). $$
The brutal truncation induces a triangle functor 
$$\beta_{proj}:\K_{\tac}(\Projj R)\longrightarrow \D_{\sg}^{\bb}(R).$$
This functor is always full and faithful. If $R$ is either an Artin ring or commutative Noetherian local ring, then the functor $\beta_{proj}$ is dense if and only if $R$ is Gorenstein, see \cite{Buc87}, \cite{Hap91} and \cite{BJO14}.
\\
Motivated by the discussion above, in this section we want to introduce $N$-totally acyclic complexes and $N$-singularity category. We show that the restriction of functor $\mathbf{F}$ to this category is an equivalence. As a result of this equivalence, we show that there exists equivalences between $N$-singularity category of $\Mod R$ and usual singularity category of $\Mod \TT_{N-1}(R)$.
\\
It is easy to see that an $N$-complex $X^\bullet\in \K_N(\Projj R)$ is $N$-acyclic if and only if $$\Hom_{\K_N(\Projj R)}(P^\bullet,X^\bullet)=0$$ for all $P^\bullet\in \K^{\bb}(\Projj R)$.
\begin{definition}
An $N$-complex $X^\bullet\in \K_N(\Projj R)$ is called $N$-totally acyclic if and only if $\Hom_{\K_N(\Projj R)}(P^\bullet,X^\bullet)=0$ and $\Hom_{\K_N(\Projj R)}(X^\bullet,P^\bullet)=0$ for all $P^\bullet\in \K^{\bb}_N(\Projj R)$. We denote by $\K^{\tac}_N(\Projj R)$ the category of all $N$-totally acyclic complexes in $\Projj R$.
\end{definition}
Clearly, if $X^\bullet\in \K^{\tac}_N(\Projj R)$ then $X^\bullet$ is an $N$-acyclic complex. 
\begin{proposition}
For a left coherent ring $R$, we have the following triangle equivalences.
\begin{itemize}
\item[$(i)$] $\K^{\ac}_N(\Projj R)\cong \K_{\ac}(\Projj \TT_{N-1}(R)) $.
\item[$(ii)$] $\K^{\tac}_N(\Projj R)\cong \K_{\tac}(\Projj \TT_{N-1}(R)) $.
\end{itemize}

\end{proposition}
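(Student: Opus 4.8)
The plan is to prove that the functor $\mathbf{F}$ of Construction \ref{construction 01} restricts to both asserted equivalences, so that the statement reduces to Theorem \ref{theorem 02} together with some bookkeeping about finite generation and exactness. First I would record the easy inclusions: each term of $\mathbf{F}(P^\bullet)$ is a finite direct sum of the modules $P^i$, so $\mathbf{F}$ sends $\C_N(\Projj R)$ into $\C(\Projj \TT_{N-1}(R))$; by Lemma \ref{lemma 014} it sends $N$-exact complexes to exact ones; and by Proposition \ref{proposition 10} it is a fully faithful triangle functor. Consequently $\mathbf{F}$ restricts to fully faithful triangle functors
$$\K^{\ac}_N(\Projj R)\longrightarrow \K_{\ac}(\Projj \TT_{N-1}(R)), \qquad \K^{\tac}_N(\Projj R)\longrightarrow \K_{\tac}(\Projj \TT_{N-1}(R)),$$
and the whole content is to prove that these restrictions are dense.

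For $(i)$ I would first promote Lemma \ref{lemma 014} to an equivalence: a complex of projectives $X^\bullet$ is $N$-exact if and only if $\mathbf{F}(X^\bullet)$ is exact. One direction is Lemma \ref{lemma 014}; for the converse one compares the homology of $\mathbf{F}(X^\bullet)$ at each vertex of $A_{N-1}$ with the modules $\HE^i_r(X^\bullet)$, or, more cleanly, uses that a complex of projectives is $N$-exact (resp. exact) exactly when it lies in the right orthogonal of $\K^{\bb}_N(\Projj R)$ (resp. of $\K^{\bb}(\Projj \TT_{N-1}(R))$) — as in the remark preceding the definition of $N$-totally acyclic complexes and in Remark \ref{remark 03} — and that $\mathbf{F}$, being an equivalence carrying $\K^{\bb}_N(\Projj R)$ onto $\K^{\bb}(\Projj \TT_{N-1}(R))$, carries one orthogonal onto the other. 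Now given $Y^\bullet\in\K_{\ac}(\Projj \TT_{N-1}(R))$, Theorem \ref{theorem 02} supplies $X^\bullet\in\K_N(\Proj R)$ with $\mathbf{F}(X^\bullet)\cong Y^\bullet$, and by the previous step $X^\bullet$ is $N$-exact; the only remaining point is that $X^\bullet$ may be taken with finitely generated terms. For this I would realize the inverse of $\mathbf{F}$ by reversing Construction \ref{construction 01}: since each $Y^i$ is a projective representation of $A_{N-1}$, its structure maps are split monomorphisms, and the $R$-modules read off from $Y^i$ (the cokernels of these split monomorphisms) are direct summands of the modules $Y^i_j$, hence finitely generated projective; exactness of $Y^\bullet$ keeps the intervening cycle modules finitely generated. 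Reassembling these modules and the induced maps as in Construction \ref{construction 01} produces the required $X^\bullet\in\K^{\ac}_N(\Projj R)$ with $\mathbf{F}(X^\bullet)\cong Y^\bullet$.

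For $(ii)$ I would deduce the result formally from $(i)$. By definition $X^\bullet$ lies in $\K^{\tac}_N(\Projj R)$ iff $X^\bullet\in\K_N(\Projj R)$ and $\Hom_{\K_N(\Projj R)}(P^\bullet,X^\bullet)=0=\Hom_{\K_N(\Projj R)}(X^\bullet,P^\bullet)$ for every $P^\bullet\in\K^{\bb}_N(\Projj R)$, and by Remark \ref{remark 03} (and its evident analogue for finitely generated projectives) $Y^\bullet$ lies in $\K_{\tac}(\Projj \TT_{N-1}(R))$ iff both Hom groups vanish against every object of $\K^{\bb}(\Projj \TT_{N-1}(R))$. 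Since $\mathbf{F}$ is fully faithful and restricts to the equivalence $\K^{\bb}_N(\Projj R)\cong\K^{\bb}(\Projj \TT_{N-1}(R))$, full faithfulness gives $\Hom_{\K_N(\Projj R)}(P^\bullet,X^\bullet)\cong\Hom(\mathbf{F}(P^\bullet),\mathbf{F}(X^\bullet))$ and $\Hom_{\K_N(\Projj R)}(X^\bullet,P^\bullet)\cong\Hom(\mathbf{F}(X^\bullet),\mathbf{F}(P^\bullet))$ with $\mathbf{F}(P^\bullet)$ running through all of $\K^{\bb}(\Projj \TT_{N-1}(R))$, whence $X^\bullet\in\K^{\tac}_N(\Projj R)$ if and only if $\mathbf{F}(X^\bullet)\in\K_{\tac}(\Projj \TT_{N-1}(R))$. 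Combining this with $(i)$, and using $\K_{\tac}\subseteq\K_{\ac}$ on both sides to transport density, shows that $\mathbf{F}$ restricts to the equivalence of $(ii)$.

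The delicate point is the finite-generation step in $(i)$: Theorem \ref{theorem 02} readily produces a preimage in $\K_N(\Proj R)$, but turning it into a complex of finitely generated projectives — equivalently, verifying that the reversal of Construction \ref{construction 01} is well defined (yields an $N$-complex whose image under $\mathbf{F}$ is back-isomorphic to the original) and genuinely quasi-inverse to $\mathbf{F}$ — requires the careful, somewhat tedious analysis of the differentials in Construction \ref{construction 01}. Everything else is formal.
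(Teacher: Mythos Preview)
Your approach is the paper's: both prove the result by transporting the orthogonality characterizations of (totally) acyclic complexes --- Hom-vanishing against $\K^{\bb}_N(\Projj R)$, respectively $\K^{\bb}(\Projj\TT_{N-1}(R))$ --- through the full faithfulness of $\mathbf{F}$, using the already established equivalence $\K^{\bb}_N(\Projj R)\cong\K^{\bb}(\Projj\TT_{N-1}(R))$. The paper does this in a few lines, handles density by writing ``in a similar way there exists $P^\bullet\in\K^{\ac}_N(\Projj R)$ such that $\mathbf{F}(P^\bullet)=\mathcal{P}^\bullet$'', and dispatches (ii) by citing Remark~\ref{remark 03}; your deduction of (ii) from (i) via two-sided orthogonality is exactly this.

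You are, however, more careful than the paper about one point: finite generation. Theorem~\ref{theorem 02} produces a preimage only in $\K_N(\Proj R)$, and the paper's ``in a similar way'' does not explain why it can be taken in $\K_N(\Projj R)$. Your idea of inverting Construction~\ref{construction 01} --- decomposing each term $Y^i\cong\bigoplus_j e^j_\lambda(P^i_j)$ with $P^i_j\in\Projj R$ and reassembling --- is the natural fix, but you have not actually executed it: one must check that the maps extracted from the differentials of $Y^\bullet$ form an $N$-complex and that $\mathbf{F}$ applied to the result recovers $Y^\bullet$ up to homotopy, and the alternating $\mu/\lambda$ differentials of Construction~\ref{construction 01} make this genuinely tedious. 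Your aside that exactness of $Y^\bullet$ keeps cycle modules finitely generated is a red herring for this step; if the reversal works it should work on all of $\K(\Projj\TT_{N-1}(R))$, with exactness used only afterward, via the orthogonality argument, to see the preimage is $N$-acyclic.
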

\begin{proof}
$(i)$ By theorem \ref{theorem 02}, the functor $\mathbf{F}$ induced an equivalence
 $$\xymatrix{ \K_{N}(\Proj R) \ar[r]^{\mathbf{F} \quad}_{\cong \quad} &  \K(\Proj \TT_{N-1}(R)) \\
 \K^{\ac}_{N}(\Projj R) \ar@{^{(}->}[u] & \K_{\ac}(\Projj \TT_{N-1}(R)) \ar@{^{(}->}[u] }$$
%We show that $\mathbf{F}$ can be induce on $\K^{ac}_{N}(\Projj R)$ and $\mathbf{F}|_{\K^{ac}_{N}(\Projj R)}$ is dense.
Let $P^\bullet$ be an object of $\K^{\ac}_N(\Projj R)$. 
Hence $\Hom_{\K_N(\Projj R)}(Q^\bullet,P^\bullet)=0$ for all $Q^\bullet\in \K^{\bb}_N(\Projj R)$. Now let $\mathcal{P^\bullet}\in \K^{\bb}(\Projj \TT_{N-1}(R))$. There exists an object $Q^\bullet\in \K^{\bb}_N(\Projj R)$ such that $\mathbf{F}(Q^\bullet)=\mathcal{P}^\bullet$. We have 
\begin{small}
$$\Hom_{\K(\Projj \TT_{N-1}(R))}(\mathcal{P}^\bullet,\mathbf{F}(P^\bullet))= \Hom_{\K(\Projj \TT_{N-1}(R))}(\mathbf{F}(Q^\bullet),\mathbf{F}(P^\bullet))\cong \Hom_{\K_N(\Projj R)}(Q^\bullet,P^\bullet)=0.$$
\end{small}
It shows that $\mathbf{F}(P^\bullet)\in\K_{\ac}(\Projj \TT_{N-1}(R))$. Hence the functor $\mathbf{F}$ sends any object of the subcategory $\K^{\ac}_{N}(\Projj R)$ of $\K_N(\Proj R)$ to an object of the subcategory $\K_{\ac}(\Projj \TT_{N-1}(R))$ of $\K(\Proj \TT_{N-1}(R))$.
\\
Let $\mathcal{P}^\bullet\in \K_{\ac}(\Projj \TT_{N-1}(R))$. In a similar way there exists $P^\bullet\in \K^{\ac}_{N}(\Projj R)$ such that $\mathbf{F}(P^\bullet)=\mathcal{P}^\bullet$, Hence $\mathbf{F}|_{\K^{\ac}_{N}(\Projj R)}$ is dense.
\\
$(ii)$ It is similar to $(i)$ by use of remark \ref{remark 03}. 
\end{proof}
We define the $N$-singularity category $\D^{\sg}_N(R)$ of $R$ as a Verdier quotient 
$$\K_N^{-,\bb}(\Projj R)/\K_N^{\bb}(\Projj R).$$ 
\begin{remark}
\label{remark 44}
The brutal truncation at degree zero induces a triangle functor 
$$\K_N^{\tac}(\Projj R) \longrightarrow \D_N^{\sg}(R),$$ and the following diagram shows that this functor is always full and faithful. Moreover it
is a triangle equivalence of categories when $R$ is a Gorenstein ring.
$$\xymatrix{ \K_{N}^{\tac}(\Projj R) \ar[d]^{\mathbf{F} \quad}_{\cong} \ar[r] & \D_N^{\sg}(R)=\K_N^{-,\bb}(\Projj R)/\K_N^{\bb}(\Projj R) \ar[d]^{\widetilde{\mathbf{F}} \quad}_{\cong} \\
 \K_{\tac}(\Projj \TT_{N-1}(R)) \ar@{^{(}->}[r]^{\beta_{proj}\qquad \qquad \qquad} & \D^{\bb}_{\sg}(\TT_{N-1}(R))=\K^{-,\bb}(\Projj \TT_{N-1}(R))/\K^{\bb}(\Projj R) }$$
The functor $\widetilde{\mathbf{F}}$ is induced from equivalences in diagram after theorem \ref{theorem 11}.
\end{remark}
Remark \ref{remark 44} provides us with another interpretation of quotient category $$\K^{\infty,\bb}(\Projj R)/\K^{\bb}(\Projj R)$$
where $\K^{\infty,b}(\Projj R)$ is the homotopy category of unbounded complexes with bounded homologies. Iyama and et. al. showed that there is a triangle equivalence between the above quotient category and $\D^{\bb}_{\sg}(\TT_2(R))$, whenever $R$ is a Gorenstein ring, see \cite{IKM11}. Hence by remark \ref{remark 44} we have the following equivalence of categories
$$\K^{\infty,\bb}(\Projj R)/\K^{\bb}(\Projj R)\cong \D_3^{\sg}(R).$$

\end{document}